\newtheorem{theorem}[equation]{Theorem}
\newtheorem{lemma}[equation]{Lemma}
\newtheorem{proposition}[equation]{Proposition}
\numberwithin{equation}{section}
\begin{document}

\newcommand\1{\text{\rm 1}\hspace{-.95mm}\text{\rm I}}
\newcommand\C{\mathbb{C}}
\renewcommand\epsilon{\varepsilon}
\newcommand\N{\mathbb{N}}
\newcommand\R{\mathbb{R}}
\newcommand\ssf{\hspace{.25mm}}
\newcommand\ssb{\hspace{-.25mm}}
\newcommand\supp{\operatorname{supp}}
\newcommand\Z{\mathbb{Z}}
\newcommand\const{\operatorname{const.}}

\title[Riesz transforms and  Hardy spaces $H^1$]{Riesz transforms characterizations \\
of  Hardy spaces $H^1$ \\
for the rational  Dunkl setting \\
and multidimensional Bessel operators}

\author[J. Dziuba\'nski]{Jacek Dziuba\'nski}
\address{Uniwersytet Wroc\l awski,
Instytut Matematyczny,
Pl. Grunwaldzki 2/4,
50-384 Wroc\l aw,
Poland}
\email{jdziuban@math.uni.wroc.pl}

\subjclass[2010]{Primary\,: 42B30.
Secondary\,: 33C52, 35J05, 35K08, 42B25,  42B35, 42B37, 42C05}

\keywords{Dunkl theory, Riesz transform, Hardy space, maximal operator,
atomic decomposition. }

\thanks{
Research  supported by the Polish National Science Center
(Narodowe Centrum Nauki, grant DEC-2012/05/B/ST1/00672)
and by the University of Orl\'eans.}

%\date{\today}

\begin{abstract}
We characterize the Hardy space  $H^1$ in the rational Dunkl setting associated with the reflection group $\mathbb Z_2^n$
by means of  Riesz transforms. As a corollary we obtain a Riesz transform characterization of $H^1$ for product of Bessel operators in $(0,\infty)^n$.

\end{abstract}

\maketitle

\section{Introduction and statement of the result}

The theory of Dunkl  operators had its origin in a series of  seminal  works  \cite{Dunkl0}--\cite{Dunkl3} and was developed  by many mathematicians afterwards. The Dunkl operators form a commuting system of differential-difference  operators associated with a finite group of reflections. We refer the reader to
the lecture notes \cite{Roesler3, RoeslerVoit} and references therein for the rational Dunkl theory and to \cite{Opdam} for the trigonometric Dunkl theory.

In the present paper, on the Euclidean space $\R^n$, $n\geq 1$, we consider the Dunkl operators
\begin{equation*}
D_jf(\mathbf{x})
=\tfrac{\partial}{\partial x_j} f(\mathbf{x})
+\tfrac{k_j}{x_j}\bigl[f(\mathbf{x})\ssb-\ssb f(\sigma_j\ssf\mathbf{x})\bigr]
\qquad(j\!=\!1,2,\dots,n)
\end{equation*}
associated with the reflections
\begin{equation}\label{Reflections}
\sigma_j\ssf(x_1,x_2,\dots,x_j,\dots,x_n)=(x_1,x_2,\dots,-x_j,\dots,x_n)
\end{equation}
and the multiplicities \ssf$k_j\!\ge\ssb0$\ssf.
Their joint eigenfunctions form the Dunkl kernel
\begin{equation}\label{DunklKernelProduct}
\mathbf{E}(\mathbf{x},\mathbf{y})
=\prod\nolimits_{\ssf j=1}^{\,n}\ssb E_{k_j}(x_j,y_j)\,,
\end{equation}
\begin{equation}\label{eq1.3}
 D_j\mathbf E(\cdot,\mathbf y)(\mathbf x)=y_j\mathbf E (\mathbf x,\mathbf y),
\end{equation}
where
\begin{equation}\begin{aligned}\label{DunklKernel1D}
E_k(x,y)&=\tfrac{\Gamma(k\ssf+\frac12)}{\Gamma(k)\,\Gamma(\frac12)}
\int_{-1}^{+1}\hspace{-1mm}(1\!-\ssb u)^{k-1}\ssf(1\!+\ssb u)^k\,e^{\,x\ssf y\ssf u}\,du\,\\
&=2^{k-1\slash 2} \Gamma\Big(k+\frac{1}{2}\Big)|xy|^{\frac{1}{2}-k}\Big(I_{k-1\slash 2}(|xy|)+\text{\rm sgn}(xy)I_{k+1\slash 2}(|xy|)\Big)
\end{aligned}\end{equation}
(see for instance \cite[p. 107, Example\;2.1]{Roesler3}).
Here $I_\nu(x)$  is the modified Bessel function \ssf (see, e.g., \cite{NIST, Watson},).
Notice that \,$\mathbf{E}(\mathbf{x},\mathbf{y})\ssb
=\ssb e^{\ssf\langle\mathbf{x},\mathbf{y}\rangle}$
\ssf if all multiplicities \ssf$k_j$ vanish.

The Dunkl Laplacian
\begin{equation*}
\mathbf{L}f(\mathbf{x})
=\sum\nolimits_{\ssf j=1}^{\,n}\ssb D_j^{\ssf2}f(\mathbf{x})
=\sum\nolimits_{\ssf j=1}^{\,n}\ssf\Bigl\{
\bigl(\tfrac\partial{\partial\ssf x_j}\bigr)^2\ssb f(\mathbf{x})\ssb
+\ssb\tfrac{2\ssf k_j}{x_j}\ssf\tfrac{\partial}{\partial\ssf x_j}f(\mathbf{x})\ssb
-\ssb\tfrac{k_j}{x_j^2}\ssf\bigl[\ssf f(\mathbf{x})\!
-\!f(\sigma_j\ssf\mathbf{x})\ssf\bigr]\Bigr\}
\end{equation*}
is the infinitesimal generator of the heat semigroup
$\{e^{\,t\,\mathbf{L}}\}_{t>0}$,
which acts by linear self-adjoint operators on $L^2(\R^n,d\boldsymbol{\mu})$
and by linear contractions on $L^p(\R^n,d\boldsymbol{\mu})\ssf$,
for every $1\ssb\le\ssb p\ssb\le\ssb\infty$\ssf,
where
\begin{equation}\label{ProductMeasure}
d\boldsymbol{\mu}(\mathbf{x})
=d\mu_1(x_1)\ssf\dots\,d\mu_n(x_n)
=|x_1|^{\ssf2\ssf k_1}\dots\,|x_n|^{\ssf2\ssf k_n}\,
dx_1\ssf\dots\ssf dx_n.
\end{equation}
Clearly,
\begin{equation*}
 \mathbf L \mathbf E(\cdot, \mathbf y)(\mathbf x)=|\mathbf y|^2 \mathbf E(\mathbf x,\mathbf y).
\end{equation*}
The heat semigroup, which is strongly continuous on $L^p(\mathbb R^n,d\boldsymbol \mu)$ for $1\leq p<\infty$,  consists of integral operators
\begin{equation*}
e^{\,t\ssf\mathbf{L}}f(\mathbf{x})
=\int_{\ssf\R^n}\,
\mathbf{h}_{\ssf t}(\mathbf{x},\mathbf{y})\,f(\mathbf{y})\,d\boldsymbol{\mu}(\mathbf{y})\,
\end{equation*}
associated with the heat kernel
\begin{equation}\label{HeatKernelProduct}
\mathbf{h}_{\ssf t}(\mathbf{x},\mathbf{y})=
\mathbf{c}_{\ssf\mathbf{k}}^{-1}\,t^{-\frac{\mathbf{N}}2}\,
e^{-\frac{|\mathbf{x}|^2+\ssf|\mathbf{y}|^2}{4\,t}}\,
\mathbf{E}\bigl(\tfrac{\mathbf{x}}{\sqrt{2\ssf t\ssf}},
\tfrac{\mathbf{y}}{\sqrt{2\ssf t\ssf}}\bigr)\ssf,
\end{equation}
see, e.g.,  \cite{Roesler2}, where
\begin{equation}\label{HomogeneousDimension}
\mathbf{N}=n+\sum\nolimits_{\ssf j=1}^{\,n}\ssb2\,k_j
\end{equation}
is the homogeneous dimension and
\begin{equation*}
\mathbf{c}_{\ssf\mathbf{k}}\ssf=\,2^{\frac{\mathbf{N}}2}\ssb
\int_{\ssf\R^n}\,e^{-\frac{|\mathbf{x}|^2}2}\ssf\,d\boldsymbol{\mu}(\mathbf{x})\,
=\,2^{\ssf\mathbf{N}}\,\prod\nolimits_{\ssf j=1}^{\,n}\ssb\Gamma(k_j\!+\!\tfrac12).
\end{equation*}

The Dunkl  transform is defined  by
\begin{equation}\label{FourierTransform}
\mathcal{F}\ssb f(\boldsymbol{\xi})
=\ssf\mathbf{c}_{\ssf\mathbf{k}}^{-1}\!
\int_{\ssf\R^n}f(\mathbf{x})\,
\mathbf{E}(\mathbf{x},\ssb-\ssf i\ssf\boldsymbol{\xi})\,d\boldsymbol{\mu}(\mathbf{x})\,.
\end{equation}
It is an isometric isomorphism of \ssf$L^2(\R^n,d\boldsymbol{\mu})$ onto itself
with  the inversion formula:
\begin{equation*}
f(\mathbf{x})=\mathcal{F}^{\ssf2\ssb}f(-\mathbf{x})\,
\end{equation*}
(see, e.g., \cite{Dunkl3}, \cite{dJ}).

The Hardy space \ssf$H^1_{{\rm max},\, \mathbf L}$ associated with $\mathbf L$ is the set of all functions
\ssf$f\!\in\!L^1(\R^n,d\boldsymbol{\mu})$
\ssf whose maximal heat function
\begin{equation}\label{MaximalHeatOperator}
\mathbf{h}_{\ssf*}f(\mathbf{x})
=\ssf\sup\nolimits_{\,t>0}\,\Bigl|\ssf\int_{\ssf\R^n}
\mathbf{h}_{\ssf t}(\mathbf{x},\mathbf{y})\,f(\mathbf{y})\ssf d\boldsymbol{\mu}(\mathbf{y})\Bigr|
\end{equation}
belongs to \ssf$L^1(\R^n,d\boldsymbol{\mu})$
and the norm is given by
\begin{equation*}
\|f\|_{H^1_{\rm{max},\mathbf L}}=\ssf\|\ssf\mathbf{h}_{\ssf*}f\ssf\|_{L^1(\mathbb R^n,\, d\boldsymbol\mu)}\ssf.
\end{equation*}

Now we turn  to the atomic definition of the Hardy space $H^1$.
Notice that $\R^n$,
equipped with the Euclidean distance
\ssf$d\ssf(\mathbf{x},\mathbf{y})\ssb=\ssb|\ssf\mathbf{x}\!-\!\mathbf{y}\ssf|$
\ssf and with the measure \ssf$\boldsymbol{\mu}$\ssf,
is a space of homogeneous type in the sense of Coifman-Weiss  \cite{CoifmanWeiss}.
An atom is a measurable function \,$a\ssb:\ssb\R^n\!\to\ssb\C$
\,such that
\begin{itemize}
\item[$\bullet$]
$\;a$ \ssf is supported in a ball \ssf$B$\ssf,
$\vphantom{\displaystyle\int}$

\item[$\bullet$]
$\;\|a\|_{L^\infty}\!\lesssim\boldsymbol{\mu}\ssf(B)^{-1}$\ssf,

\item[$\bullet$]
$\;\displaystyle\int_{\ssf\R^n}\hspace{-1mm}a(\mathbf{x})\, d\boldsymbol{\mu}(\mathbf{x}) =0$\ssf.
\end{itemize}
By definition, the atomic Hardy space \ssf$H^1_{\text{atom}}$
\ssf consists of all functions \ssf$f\!\in\!L^1(\R^n,d\boldsymbol{\mu})$
which can be written as
\ssf$f\hspace{-.4mm}=\ssb\sum_{\ssf\ell}\ssb\lambda_{\ssf\ell}\,a_{\ssf\ell}$\ssf,
where the \ssf$a_\ell$'s are atoms and
\ssf$\sum_{\ssf\ell}\ssb|\lambda_{\ssf\ell}|\!<\!+\infty$\ssf,
and the norm is given by
\begin{equation*}
\|f\|_{H^1_\text{atom}}=\,\inf\,\sum\nolimits_{\ssf\ell}|\lambda_\ell|\,,
\end{equation*}
where the infimum is taken over all atomic decompositions of \ssf$f$.

Hardy spaces on spaces of homogeneous type (see, e.g., \cite{CoifmanWeiss}, \cite{MaciasSegovia}, \cite{Uchiyama}) are extensions of the classical real Hardy spaces on $\mathbb R^n$. For characterizations and properties of the classical Hardy spaces we refer the reader to the original works  \cite{BurkholderGundySilverstein}, \cite{FeffermanStein}, \cite{SteinWeiss}, \cite{Coifman}. More information are given in the book \cite{Stein} and references therein.

Hardy spaces associated with the Dunkl operator $\mathbf L$ were studied  in \cite{ABDH}. The following theorem  was proved there.

 \begin{theorem}
The spaces $H^1_{\rm{max}, \mathbf L}$ and $H^1_\text{\rm atom}$ coincide and the norms
$\|f\|_{H^1_{\rm{max},\mathbf L}}$ and $\|f\|_{H^1_\text{\rm atom}}$ are equivalent.
\end{theorem}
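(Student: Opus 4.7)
The standard strategy is to prove the two inclusions $H^1_{\text{atom}}\hookrightarrow H^1_{\max,\mathbf L}$ and $H^1_{\max,\mathbf L}\hookrightarrow H^1_{\text{atom}}$ separately. Both directions hinge on pointwise estimates for $\mathbf h_t$. The $\mathbb Z_2^n$ hypothesis is essential here because \eqref{DunklKernelProduct}--\eqref{HeatKernelProduct} factor $\mathbf h_t$ as a tensor product of one-dimensional kernels, each of which is given explicitly by \eqref{DunklKernel1D}; careful asymptotic analysis of the modified Bessel functions $I_\nu$ yields a Gaussian upper bound
\begin{equation*}
\mathbf h_t(\mathbf x,\mathbf y)\,\lesssim\,\boldsymbol\mu\bigl(B(\mathbf x,\sqrt t)\bigr)^{-1}\exp\!\bigl(-c\ssf|\mathbf x-\mathbf y|^2/t\bigr),
\end{equation*}
together with a comparable H\"older regularity estimate in the spatial variables. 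These two kernel bounds form the cornerstone of everything that follows.

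For the ``easy'' direction $H^1_{\text{atom}}\hookrightarrow H^1_{\max,\mathbf L}$ I would take an atom $a$ with $\supp a\subset B=B(\mathbf x_0,r)$ and split
\begin{equation*}
\|\mathbf h_* a\|_{L^1(d\boldsymbol\mu)}\,=\,\int_{B^*}\mathbf h_* a\,d\boldsymbol\mu\,+\,\int_{(B^*)^c}\mathbf h_* a\,d\boldsymbol\mu,
\end{equation*}
with $B^*$ a concentric enlargement of $B$. The first integral is handled by Cauchy--Schwarz combined with $L^2$-boundedness of $\mathbf h_*$ (available from Stein's maximal theorem for symmetric diffusion semigroups), the atomic size bound $\|a\|_{L^2}\le\boldsymbol\mu(B)^{-1/2}$, and doubling of $\boldsymbol\mu$. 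The second is treated via the cancellation $\int a\,d\boldsymbol\mu=0$: one replaces $\mathbf h_t(\mathbf x,\mathbf y)$ by the difference $\mathbf h_t(\mathbf x,\mathbf y)-\mathbf h_t(\mathbf x,\mathbf x_0)$ and invokes the H\"older estimate together with doubling to sum the resulting Gaussian tails over dyadic annuli.

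For the converse inclusion $H^1_{\max,\mathbf L}\hookrightarrow H^1_{\text{atom}}$ I would employ the atomic-decomposition machinery on spaces of homogeneous type in the style of Coifman--Weiss and Uchiyama \cite{CoifmanWeiss, Uchiyama}. Given $f$ with $\mathbf h_* f\in L^1(d\boldsymbol\mu)$, the scheme is: form the level sets $\Omega_k=\{\mathbf h_* f>2^k\}$, take a Whitney cover of each by balls, use a partition of unity together with a Calder\'on-type reproducing formula for $e^{t\mathbf L}$ to carve $f$ into local pieces, then adjust means to obtain atoms of the prescribed form; summing atomic quasinorms against $\|\mathbf h_* f\|_{L^1}$ yields the decomposition. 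Equivalently, one shows that $\mathbf h_* f$ dominates a grand maximal function associated with the semigroup and quotes Uchiyama's criterion. The principal obstacle throughout is producing the kernel estimates from the first paragraph uniformly across all scales, including scales where the factor $|x_j|^{2k_j}$ makes $\boldsymbol\mu$ behave anisotropically near the reflecting hyperplanes $\{x_j=0\}$; once those estimates are established, every remaining step is an adaptation of arguments standard in the Coifman--Weiss framework.
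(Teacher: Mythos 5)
The cornerstone of your argument is the claimed Gaussian upper bound $\mathbf h_t(\mathbf x,\mathbf y)\lesssim\boldsymbol\mu(B(\mathbf x,\sqrt t))^{-1}\exp(-c|\mathbf x-\mathbf y|^2/t)$. This is false in the $\mathbb Z_2^n$ Dunkl setting. As the paper records in its Appendix (formula \eqref{heatB}, quoted from \cite{ABDH}), the one-dimensional factor $h_t^{\{j\}}(x,y)$ in the regime $-xy\ge t$ is comparable to $t^{1/2}(-xy)^{-k_j-1}e^{-(x+y)^2/4t}$: the exponential decays in $(x+y)^2/t$, not $(x-y)^2/t$. Taking $y\approx -x$ with $|x|$ large, one has $|x-y|\approx 2|x|$ large while $x+y\approx 0$, and a direct computation shows the ratio of $h_t^{\{j\}}(x,y)$ to $\mu_j(B(x,\sqrt t))^{-1}e^{-c(x-y)^2/t}$ blows up. In other words, $\mathbf h_t(\mathbf x,\mathbf y)$ remains large whenever $\mathbf x$ is near a reflection $\sigma\mathbf y$ of $\mathbf y$, even though the Euclidean distance $|\mathbf x-\mathbf y|$ is large, and mean-zero cancellation of the atom cannot repair a failure of decay of the kernel itself. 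This collapses the far-field estimate in the inclusion $H^1_{\text{atom}}\hookrightarrow H^1_{\max,\mathbf L}$ and likewise the kernel hypotheses you need to invoke Uchiyama's criterion for the converse.

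The paper itself does not reprove this theorem; it is quoted from \cite{ABDH}, and the Appendix here recalls the mechanism that actually makes the argument work. The heat kernel is split as $\mathbf h_t=\mathbf H_t+\mathbf S_t$, where $\mathbf H_t$ \emph{does} satisfy Uchiyama-type on-diagonal lower, off-diagonal upper, and H\"older estimates with respect to the quasi-distance $\widetilde d(\mathbf x,\mathbf y)=\inf\boldsymbol\mu(B)$ and the volume normalization $\boldsymbol\mu(B(\mathbf x,\sqrt t))$ (see \eqref{OnDiagonalLowerEstimateKr}--\eqref{LipschitzEstimateKr}), while $\mathbf S_t$ collects the contributions concentrated near the reflections $\sigma\mathbf y\ne\mathbf y$, and its maximal operator $\mathbf S_*$ is shown separately to be bounded on $L^1(d\boldsymbol\mu)$. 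Uchiyama's theorem is then applied to $\mathbf H_t$ alone. Your proposal has the right global architecture (Uchiyama/Coifman--Weiss for spaces of homogeneous type, Stein's maximal theorem for the near-field, cancellation for the far-field), but it treats $\mathbf h_t$ as if it were a single Gaussian-dominated kernel; the missing idea is precisely this decomposition of $\mathbf h_t$ into a Uchiyama-admissible part plus an $L^1$-bounded remainder accounting for the reflection contributions.
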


The present paper is a continuation of \cite{ABDH} and deals with  the Riesz transforms characterization of $H^1_{{\rm max},\mathbf L}$.  We define the Riesz transforms in the Dunkl setting putting
$$ \mathcal R_j=D_j(-\mathbf L)^{-1\slash 2}.$$
The operators $\mathcal R_j$ can be expressed as the  Dunkl  multiplier operators, namely,
\begin{equation}\begin{aligned}
 \mathcal R_jf(\mathbf x) &=D_j(- \mathbf L)^{-1\slash 2}f(\mathbf x)
 =D_j\int_{\mathbb R^n} \frac{1}{|\boldsymbol \xi|} \mathbf E(\mathbf x,i\boldsymbol \xi)
 \mathcal Ff(\boldsymbol\xi)\, d\boldsymbol\mu (\boldsymbol  \xi)\\
 &=\int_{\mathbb R^n} i\, \frac{\xi_j}{|\boldsymbol \xi|} \mathbf E(\mathbf x,i\boldsymbol \xi)
 \mathcal F f(\boldsymbol \xi) d\boldsymbol \mu(\boldsymbol \xi).\\
\end{aligned}\end{equation}

Our  main result is the following theorem which is an analogue of the result about the characterization of the classical Hardy spaces by the classical Riesz transforms $\frac{\partial}{\partial x_j}(-\Delta)^{-1\slash 2}$ (see, e.g., \cite[Chapter III, Section 4]{Stein}).

\begin{theorem}\label{Theorem1} Let $f \!\in\!L^1(\R^n,d\boldsymbol{\mu})$. Then $f\in H^1_{\rm{max},\mathbf L}$
if and only if $\mathcal R_jf\in \!L^1(\R^n,d\boldsymbol{\mu})$ for $j=1,2,...,n$. Moreover,
there exists a constant \,$C\!>\!0$ such that
\begin{equation}\label{eq222}
C^{-1}\,\|f\|_{H^1_{\rm{max},\mathbf L}}\ssb\le \| f\|_{L^1(\mathbb R^n,\, d\boldsymbol\mu)}+\sum_{j=1}^n \| \mathcal R_jf\|_{L^1(\mathbb R^n,\, d\boldsymbol \mu) } \leq  C\,\| f\|_{H^1_{\rm{max},\mathbf L}}\ssf.
\end{equation}
\end{theorem}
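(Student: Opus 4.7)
My plan is to prove the two directions separately, using as a bridge the atomic decomposition $H^1_{\max,\mathbf L}=H^1_{\text{atom}}$ recalled in the preceding theorem.

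\textbf{Easy direction.} For the implication $f\in H^1_{\max,\mathbf L}\Rightarrow\mathcal R_jf\in L^1$, the bound $\|f\|_{L^1}\le\|f\|_{H^1_{\max,\mathbf L}}$ is immediate from $\mathbf h_\ast f\ge|f|$ together with the strong continuity of $e^{\,t\mathbf L}$ on $L^1(\mathbb R^n,d\boldsymbol\mu)$ at $t=0$. To control $\|\mathcal R_jf\|_{L^1}$, it suffices by the atomic decomposition to show the uniform estimate $\|\mathcal R_ja\|_{L^1(d\boldsymbol\mu)}\lesssim 1$ for every atom $a$. The route I have in mind is the subordination identity
$$\mathcal R_j\ssf=\ssf\tfrac{1}{\sqrt\pi}\int_0^\infty t^{-1/2}\ssf D_je^{\,t\mathbf L}\,dt,$$
from which one extracts an integral kernel $\mathcal K_j(\mathbf x,\mathbf y)$. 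The product structure of $\mathbf h_t$ stemming from the group $\mathbb Z_2^n$ reduces all pointwise size and Hölder-in-$\mathbf y$ estimates for $\mathcal K_j$ to sharp one-dimensional modified-Bessel heat-kernel bounds, after which the standard Calderón--Zygmund argument on the space of homogeneous type $(\mathbb R^n,d,d\boldsymbol\mu)$ applies: the vanishing mean of $a$ handles the far region via regularity of $\mathcal K_j$, while the $L^2$-boundedness of $\mathcal R_j$ (obtained from Plancherel for the Dunkl transform) together with Cauchy--Schwarz handles a fixed dilate of the supporting ball.

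\textbf{Hard direction.} For the converse, I would follow the Stein--Weiss scheme by passing to the Poisson semigroup $P_t=e^{-t\sqrt{-\mathbf L}}$ defined by subordination to $e^{\,t\mathbf L}$, and introducing on the upper half-space the vector
$$u_0(\mathbf x,t)=P_tf(\mathbf x),\qquad u_j(\mathbf x,t)=P_t(\mathcal R_jf)(\mathbf x),\quad j=1,\dots,n.$$
By construction $\partial_t u_0=-\sqrt{-\mathbf L}\,u_0$ and formally $\partial_t u_j=D_j u_0$, a Cauchy--Riemann-type system in the variables $(\partial_t,D_1,\dots,D_n)$. From this one aims to derive subharmonicity of $|\mathbf u|^p$ for some $p<1$ sufficiently close to $1$, relative to $\partial_t^2+\mathbf L_{\mathbf x}$. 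Via a nontangential maximal function argument this gives
$$\Bigl\|\sup_{t>0}|P_tf|\Bigr\|_{L^1(d\boldsymbol\mu)}\ssf\lesssim\ssf\|f\|_{L^1(d\boldsymbol\mu)}+\sum_{j=1}^n\|\mathcal R_jf\|_{L^1(d\boldsymbol\mu)},$$
and a further subordination dominates $\mathbf h_\ast f$ by the Poisson maximal function, so that $f\in H^1_{\max,\mathbf L}$.

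\textbf{Main obstacle.} The delicate step is the subharmonicity: $\mathbf L$ contains the nonlocal difference terms $-(k_j/x_j^2)[f(\mathbf x)-f(\sigma_j\mathbf x)]$ and the Dunkl operators $D_j$ fail the Leibniz rule, so the pointwise subharmonic inequality for $|\mathbf u|^p$ does not transfer verbatim from the Euclidean case. To overcome this I would exploit the full $\mathbb Z_2^n$ symmetry by splitting each function into its $2^n$ parity components $f^{\boldsymbol\varepsilon}$ characterized by $f^{\boldsymbol\varepsilon}(\sigma_j\mathbf x)=\varepsilon_jf^{\boldsymbol\varepsilon}(\mathbf x)$ for $\boldsymbol\varepsilon\in\{\pm1\}^n$. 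On a fixed parity sector the difference terms collapse to multiplicative ones, $\mathbf L$ becomes a multidimensional Bessel-type operator on the positive octant $(0,\infty)^n$, and each $D_j$ acts there as a first-order Bessel differential operator with no difference part; the classical subharmonicity argument for generalized Cauchy--Riemann systems then adapts, and one reassembles the global statement on $\mathbb R^n$ by summing over sectors. This parity reduction is also precisely what yields the Bessel corollary announced in the abstract.
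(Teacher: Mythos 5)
Your easy direction is reasonable but does not match the paper's route: the paper does not run a Calder\'on--Zygmund argument on atoms, instead it invokes the H\"ormander-type Dunkl multiplier theorem from \cite{ABDH} (Theorem~\ref{Theorem2}), noting that $m_j(\boldsymbol\xi)=i\,\xi_j/|\boldsymbol\xi|$ satisfies the Sobolev condition \eqref{multiplier222}. Both approaches would get there; the multiplier theorem simply packages the kernel estimates and the CZ machinery that you are proposing to redo from scratch, so it is the shorter road. The more substantial issue is in your hard direction.

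The parity-reduction plan for subharmonicity has a genuine gap. You are correct that on a fixed parity sector the difference terms collapse and $D_j$ becomes a Bessel-type first-order operator, but this does \emph{not} reduce the problem to the ``classical'' symmetric, trace-free Cauchy--Riemann situation. Take $f$ even in all coordinates, so $u_0=P_tf$ is fully even and $u_j=-P_t\mathcal R_jf$ is odd in $x_j$, even in the others. The relation $\partial_t u_0+\sum_j D_ju_j=0$ then reads $\partial_t u_0+\sum_j\partial_{x_j}u_j=-\sum_j\tfrac{2k_j}{x_j}u_j$, i.e.\ the Jacobian matrix $B=(\partial_{x_i}u_\ell)$ has a nonzero trace of order $\sum_j k_j|u_j|/|x_j|$, which is not controlled by the classical lemma $\|B\|^2\le(1-\delta)\|B\|_{\rm HS}^2$ for symmetric trace-zero matrices. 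Similar first-order ``defect'' terms appear in the antisymmetric part whenever components of different parity are compared. What the paper does instead is (i) prove a new matrix inequality (Lemma~\ref{lemma22}) allowing a defective trace and antisymmetric part, at the cost of a controllable correction $\varepsilon\bigl((\operatorname{tr}B)^2+\sum_{i<j}(b_{ij}-b_{ji})^2\bigr)$; and (ii) augment the vector to $F=\{\mathbf u^\sigma\}_{\sigma\in\mathcal G}$, so that the difference terms produced when $\mathcal L$ hits $|F|^q$ appear with a favorable sign (equation~\eqref{eq367}) and absorb exactly the trace/antisymmetry defects via \eqref{eq368}--\eqref{eq369}. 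Without these two ingredients the subharmonicity of $|\mathbf u|^q$ does not ``adapt.'' You also underplay two further points the paper has to settle explicitly: a maximum principle for $\widetilde{\mathbb L}$ that works across the singular set $\{x_1\cdots x_n=0\}$ (Proposition~\ref{MaximumPrinciple}, requiring a separate argument since Hopf's principle fails there), and the comparison step $|F(\varepsilon+x_0,\mathbf x)|^q\le P_{x_0}(|F_\varepsilon|^q)(\mathbf x)$ which replaces a nontangential-maximal-function argument and uses the behavior of $P_t$ at infinity (Proposition~\ref{propositionP2}).
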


Let us  emphasize that Theorem \ref{Theorem1} implies a Riesz transform characterization of the Hardy space $H^1_{{\rm max}, \mathbb L}$ associated with multidimensional Bessel operator $\mathbb L$. To be more precise, on $(0,\infty)^n$ equipped with the measure $d\boldsymbol\mu$ we consider the Bessel operator
\begin{equation*}\label{Bessel}
{\mathbb L}=
\sum_{j=1}^n \Big(\frac{\partial^2}{\partial x_j^2}+\frac{2k_j}{x_j}\frac{\partial}{\partial x_j}\Big)
\end{equation*}
and the  associated semigroup $\{e^{t\mathbb L}\}_{t>0}$. The action of the semigroup $e^{t\mathbb L}$ on functions is given by integration against the heat kernel $\mathbb H_t(\mathbf x,\mathbf y)$, namely,
\begin{equation}\label{productBessel}
 e^{t\mathbb L} f(\mathbf x)=\int_{(0,\infty)^n } \mathbb H_t(\mathbf x,\mathbf y) f(\mathbf y) d\boldsymbol\mu (\mathbf y),
\end{equation}
where $\mathbb H_t(x,y)=\prod_{j=1}^n {\tt h}_t^{[j]}(x_j,y_j) $,
\begin{equation}\label{heatBessel}{\tt h}_t^{[j]}(x_j,y_j)= (2 t )^{-1}\exp
 (-(x_j^2+y_j^2)\slash 4t) I_{k_j-1\slash 2}\Big(\frac{x_jy_j}{2t}\Big)(x_jy_j)^{-k_j +1\slash 2},
 \end{equation}
 (see \cite{Watson}). We define the Hardy space (see, e.g., \cite{DziubanskiPreisnerWrobel})
 $$H^1_{\rm{max},\mathbb L}=\Big\{ f\in L^1((0,\infty)^n, d\boldsymbol\mu):\big\| \sup_{ t>0} | e^{t\mathbb L}f|\big\|_{L^1((0,\infty)^n, d\boldsymbol\mu)}=\| f\|_{H^1_{{\rm max},\mathbb L}}<\infty\Big\}.$$
Let $R_j=\partial_{x_j}\mathbb L^{-1\slash 2}$ denote the Riesz transform associated with $\mathbb L$.
Now we state our second main result.
\begin{theorem}\label{TheoremBessel} Assume that  $f\in L^1((0,\infty)^n, d\boldsymbol\mu)$. Then $f$  belongs to $H^1_{\rm{max},\mathbb L}$ if and only if $R_jf\in L^1((0,\infty)^n, d\boldsymbol\mu)$ for $j=1,2,...,n$. Moreover, there is a constant $C>0$ such that
\begin{equation} C^{-1} \| f\|_{H^1_{\rm{max},\mathbb L}} \leq \| f\|_{L^1((0,\infty)^n, d\boldsymbol\mu)} +\sum_{j=1}^n \| R_jf\|_{L^1((0,\infty)^n, d\boldsymbol\mu)}\leq C \| f\|_{H^1_{\rm{max},\mathbb L}}.
 \end{equation}
\end{theorem}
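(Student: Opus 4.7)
The plan is to deduce Theorem~\ref{TheoremBessel} from Theorem~\ref{Theorem1} by a symmetrization argument that identifies the Bessel setting on $(0,\infty)^n$ with the subspace of $\Z_2^n$-invariant functions in the Dunkl setting on $\R^n$. Given $f\in L^1((0,\infty)^n,d\boldsymbol\mu)$, I would set
\begin{equation*}
\tilde f(\mathbf x)=f(|x_1|,\dots,|x_n|),\qquad \mathbf x\in\R^n,
\end{equation*}
which is invariant under each reflection $\sigma_j$ from \eqref{Reflections}, lies in $L^1(\R^n,d\boldsymbol\mu)$, and satisfies $\|\tilde f\|_{L^1(\R^n,d\boldsymbol\mu)}=2^n\|f\|_{L^1((0,\infty)^n,d\boldsymbol\mu)}$.

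Two identifications then drive the argument. First, on $\Z_2^n$-invariant functions the reflection terms in $\mathbf L$ vanish, so $\mathbf L$ coincides with $\mathbb L$. At the kernel level, combining \eqref{DunklKernel1D}, \eqref{HeatKernelProduct} and \eqref{heatBessel} (and the one-dimensional identity $E_k(x,y)+E_k(x,-y)=2^{k+1/2}\Gamma(k+\tfrac12)(xy)^{1/2-k}I_{k-1/2}(xy)$ for $x,y>0$) yields the symmetrization identity
\begin{equation*}
\mathbb H_{\ssf t}(\mathbf x,\mathbf y)=\sum\nolimits_{\boldsymbol\epsilon\in\{\pm 1\}^n}\mathbf h_{\ssf t}(\mathbf x,\boldsymbol\epsilon\cdot\mathbf y)\qquad(\mathbf x,\mathbf y\in(0,\infty)^n),
\end{equation*}
so that $e^{\,t\ssf\mathbf L}\tilde f(\mathbf x)=e^{\,t\ssf\mathbb L}f(\mathbf x)$ for $\mathbf x\in(0,\infty)^n$; taking suprema in $t$ gives $\|\tilde f\|_{H^1_{\rm{max},\mathbf L}}=2^n\|f\|_{H^1_{\rm{max},\mathbb L}}$. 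Second, on functions even in $x_j$ the Dunkl derivative $D_j$ reduces to $\partial/\partial x_j$. Because the $\Z_2^n$-invariant subspace is $\mathbf L$-invariant, it is preserved by the functional calculus of $\mathbf L$, and $(-\mathbf L)^{-1/2}\tilde f$ equals the even extension of $(-\mathbb L)^{-1/2}f$. Applying $D_j$ then shows that $\mathcal R_j\tilde f$ is the odd-in-$x_j$, even-in-the-other-variables extension of $R_jf$; in particular $|\mathcal R_j\tilde f|$ is $\Z_2^n$-invariant and
\begin{equation*}
\|\mathcal R_j\tilde f\|_{L^1(\R^n,d\boldsymbol\mu)}=2^n\|R_jf\|_{L^1((0,\infty)^n,d\boldsymbol\mu)}.
\end{equation*}
Invoking Theorem~\ref{Theorem1} for $\tilde f$ and combining with these two identifications immediately yields the claimed equivalence of norms.

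The main technical point will be justifying that $(-\mathbf L)^{-1/2}\tilde f$ is indeed the even extension of $(-\mathbb L)^{-1/2}f$ in the appropriate distributional sense, since $\tilde f$ is only known to be integrable and not square-integrable. I expect to handle this via the subordination formula $(-\mathbf L)^{-1/2}=\pi^{-1/2}\int_0^\infty t^{-1/2}e^{\,t\ssf\mathbf L}\,dt$, which reduces the claim to the heat-kernel symmetrization identity above; the integrability at $t\to\infty$ is not an issue once the argument is first verified on a dense class of symmetric $H^1$-atoms (which have mean zero) and then extended by density using the $L^1$-bounds from Theorem~\ref{Theorem1}. Possible boundary subtleties along the coordinate hyperplanes $\{x_j=0\}$ are controlled by the fact that for functions smooth and even in $x_j$ the potentially singular reflection correction in $D_j$ vanishes identically.
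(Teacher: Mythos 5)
Your proposal is correct and takes essentially the same approach as the paper: extend $f$ to its $\Z_2^n$-invariant function $\widetilde f$ on $\R^n$, use the heat-kernel symmetrization identity to get $e^{t\mathbf L}\widetilde f = e^{t\mathbb L}f$ on $(0,\infty)^n$ (hence $\|\widetilde f\|_{H^1_{\rm max,\mathbf L}} \sim \|f\|_{H^1_{\rm max,\mathbb L}}$), observe that $|\mathcal R_j \widetilde f|$ is the $\Z_2^n$-invariant extension of $|R_j f|$, and invoke Theorem \ref{Theorem1}. The paper states these identifications even more briefly than you do, so your extra care with the $L^1$ subordination issue is a welcome addition rather than a deviation.
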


\medskip

\section{Poisson semigroup}

The  Poisson semigroup  $\{ P_t\}_{t>0}$ in the Dunkl setting  is defined by:
$$ P_tf(\mathbf x)=e^{-t\sqrt{-\mathbf L}}f(\mathbf x)= \mathcal F^{-1}(e^{-t|\boldsymbol \xi|}\mathcal Ff(\boldsymbol \xi))(\mathbf x)=\int_{\mathbb R^n} P_t(\mathbf x,\mathbf y)\, d\boldsymbol\mu (\mathbf y),$$
where the  associated Poisson kernel is given by
$$ P_t(\mathbf x, \mathbf y)= c\int_{\mathbb R^n} \mathbf E ( \mathbf x,i\boldsymbol \xi)e^{-t|\boldsymbol \xi|} \mathbf E (\mathbf y,-i\boldsymbol \xi)d\boldsymbol \mu (\boldsymbol \xi).
$$
 By the subordination formula
\begin{equation}\label{subordination}
P_t(\mathbf x,\mathbf y)=\frac{1}{\sqrt{\pi}}\int_0^\infty e^{-u}\mathbf h_{t^2\slash 4u}(\mathbf x,\mathbf y)\frac{du}{\sqrt{u}}.
\end{equation}
It easily follows from (\ref{subordination}), (\ref{HeatKernelProduct}), (\ref{DunklKernelProduct}) and (\ref{DunklKernel1D}) that $P_t(\mathbf x,\mathbf y)=P_t(\mathbf y,\mathbf x)$ is a positive smooth function of the $(t,\mathbf x,\mathbf y)$ variables.  We shall also show (see Appendix) that for every $1\leq p<\infty$ and $t>0$ there is a constant $C_{p,t}$ such that
\begin{equation}\label{Lp}
\sup_{\mathbf x\in\mathbb R^n} \int_{\mathbb R^n} P_t(\mathbf x,\mathbf y)^pd\boldsymbol\mu(\mathbf y)\leq C_{p,t}.
\end{equation}
Let $\mathcal L=\frac{\partial^2}{\partial t^2}+\mathbf L$. Then
\begin{equation}\label{harmonic} \mathcal L P_tf (\mathbf x)=0.
\end{equation}
Let
$$P_*f(\mathbf x)=\sup_{t>0} |P_tf(\mathbf x)|$$
be the maximal operator associated with $\{P_t\}_{t>0}$.

In order to prove Theorem \ref{Theorem1}  we shall use Theorem \ref{propositionP} and Proposition \ref{propositionP2}. The proofs of them together with basic properties of $P_t(\mathbf x,\mathbf y)$ are presented in the appendix.

\begin{theorem}\label{propositionP} (a)
 Let $f\in L^1(\mathbb R^n,\, d\boldsymbol \mu)$. Then $f\in H^1_{{\rm max},\mathbf L}$ if and only if the maximal function $P_*f$
 belongs to $L^1 (\mathbb R^n,\, d\boldsymbol \mu)$. Moreover,
 $$ \| P_*f\|_{L^1(\mathbb R^n,\, d\boldsymbol \mu)} \sim \| f\|_{H^1_{\rm{max},\mathbf L}}. $$

(b) For every $1<p\leq \infty$ the maximal operator $P_*$ is bounded on $L^p(\mathbb R^n,\, d\boldsymbol \mu)$.
\end{theorem}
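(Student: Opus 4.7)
\medskip

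\noindent\textbf{Proof proposal.} The plan is to derive the forward direction of part (a) and all of part (b) from the subordination identity, and then to obtain the reverse direction in (a) via the atomic characterization from \cite{ABDH}.

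\smallskip

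First, the subordination formula (\ref{subordination}) gives the pointwise bound
$$
|P_tf(\mathbf x)|\,\le\,\frac{1}{\sqrt\pi}\int_0^\infty e^{-u}\,u^{-1/2}\,\bigl|e^{(t^2/4u)\mathbf L}f(\mathbf x)\bigr|\,du\,\le\,\mathbf h_*f(\mathbf x),
$$
since $\int_0^\infty e^{-u}u^{-1/2}\,du=\sqrt\pi$. Hence $P_*f\le\mathbf h_*f$ pointwise, which immediately yields $\|P_*f\|_{L^1}\le\|f\|_{H^1_{\max,\mathbf L}}$ and reduces part (b) to the $L^p$-boundedness of the heat maximal operator $\mathbf h_*$. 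The latter is standard: since $\{e^{t\mathbf L}\}_{t>0}$ is a symmetric Markovian semigroup, Stein's maximal theorem (Hopf--Dunford--Schwartz) gives the bound for $1<p<\infty$, while the case $p=\infty$ follows from the conservativity $P_t\mathbf{1}=\mathbf{1}$ inherited from the heat semigroup via subordination.

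\smallskip

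The converse inequality $\|f\|_{H^1_{\max,\mathbf L}}\lesssim\|P_*f\|_{L^1}$ is the substantive part. I would use the equivalence $H^1_{\max,\mathbf L}\sim H^1_{\mathrm{atom}}$ from \cite{ABDH}, reducing matters to showing that $P_*$ characterizes the atomic Hardy space on the homogeneous-type space $(\mathbb R^n,d,\boldsymbol\mu)$. For the easy inclusion $H^1_{\mathrm{atom}}\hookrightarrow\{P_*f\in L^1\}$ one verifies $\|P_*a\|_{L^1}\le C$ uniformly for every atom $a$ supported in a ball $B$: on $2B$ apply Cauchy--Schwarz together with the $L^2$-boundedness of $P_*$ (part (b)) and the kernel bound (\ref{Lp}); on $\mathbb R^n\setminus 2B$ exploit the cancellation $\int a\,d\boldsymbol\mu=0$ together with a H\"older-type estimate of $P_t(\mathbf x,\cdot)$, uniform in $t$. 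For the reverse inclusion one builds an atomic decomposition of $f$ from the level sets of $P_*f$ by means of the grand maximal function machinery on spaces of homogeneous type (Coifman--Weiss \cite{CoifmanWeiss}, Uchiyama \cite{Uchiyama}).

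\smallskip

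The main technical obstacle is establishing the size and H\"older-continuity estimates for the Poisson kernel $P_t(\mathbf x,\mathbf y)$ that feed both into the atomic bound above and into the comparison with the grand maximal function. These estimates are not automatic in the Dunkl setting, because the presence of reflections obstructs pointwise translation invariance and makes direct control of derivatives delicate; however, differentiating (\ref{subordination}) in $\mathbf y$ and inserting sharp Gaussian-type bounds for $\mathbf h_t(\mathbf x,\mathbf y)$ (as developed in \cite{Roesler2} and \cite{ABDH}) should produce the required kernel estimates after integration against the subordinator weight $e^{-u}u^{-1/2}$. Once these are in place, the standard real-variable theory on the space of homogeneous type $(\mathbb R^n,d,\boldsymbol\mu)$ closes the argument.
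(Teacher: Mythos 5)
Your derivation of the easy inequality via the pointwise domination $P_*f\le\mathbf h_*f$ from the subordination formula is correct, as is your treatment of part (b) via Stein's maximal theorem for symmetric diffusion semigroups (the paper instead obtains (b) by interpolating a weak-$(1,1)$ estimate against the $L^\infty$ bound, but your route is clean and works). The substantive issue is in your plan for the converse inequality.

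There you propose to apply the Coifman--Weiss/Uchiyama grand-maximal machinery directly to the Poisson kernel $P_t(\mathbf x,\mathbf y)$, asserting that differentiating the subordination formula and inserting ``sharp Gaussian-type bounds'' for $\mathbf h_t$ will produce the size and H\"older estimates Uchiyama's theorem requires. This step fails: the Dunkl heat kernel is \emph{not} concentrated near the Euclidean diagonal. The explicit global asymptotics recorded in (\ref{heatB}) show that $h^{\{j\}}_t(x,y)$ carries mass also near the reflected diagonal $y=-x$ (the third case, $-xy\ge t$), and this survives the subordination: $P_t(\mathbf x,\mathbf y)$ can be large when $\mathbf y$ is close to $\sigma\mathbf x$ for a reflection $\sigma$, even when the quasi-distance $\widetilde d(\mathbf x,\mathbf y)$ (built from Euclidean balls, as in the paper) is large. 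Consequently the Uchiyama upper bound $K_r(x,y)\lesssim r^{-1}(1+\widetilde d(x,y)/r)^{-1-\delta}$ and the companion H\"older estimate are simply false for $K_r=P_t$. The paper circumvents this by importing from \cite{ABDH} a decomposition $\mathbf h_t=\mathbf H_t+\mathbf S_t$, where $\mathbf H_t$ is the near-diagonal piece satisfying Uchiyama-type estimates (\ref{OnDiagonalLowerEstimateKr})--(\ref{LipschitzEstimateKr}) while the reflection piece $\mathbf S_t$ only gives an $L^1$-bounded maximal operator $\mathbf S_*$. Subordinating term by term yields $P_t=U_t+W_t$, and Lemma \ref{Lemma610} transfers the Uchiyama hypotheses to $U_t$; the $L^1$-boundedness of $W_*$ then allows one to replace $P_*$ by $U_*$ when passing to the atomic space. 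Without identifying such a decomposition your argument has no way to control the off-diagonal (reflected) contribution, and the ``standard real-variable theory'' you invoke does not close the gap.
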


\begin{proposition}\label{propositionP2} (a) Assume that  $g\in L^1_{\rm loc}(\mathbb R^n, d\boldsymbol \mu)$ and $\lim_{|\mathbf x|\to\infty } |g(\mathbf x)|=0$.
 Then
$$\lim_{(|\mathbf x|+t)\to\infty } P_tg(\mathbf x)=0.$$
(b) If $f\in L^1(\mathbb R^n,d\boldsymbol \mu)$,  then for every $\varepsilon >0$ we have
\begin{equation}\label{DD1} \lim_{(|\mathbf x|+t)\to\infty} P_{t+\varepsilon} f(\mathbf x)=0.
\end{equation}

\end{proposition}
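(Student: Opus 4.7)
The plan rests on a Euclidean-type pointwise bound for the Poisson kernel. From the integral representation (\ref{DunklKernel1D}) one verifies that $|E_k(x,y)|\le e^{|xy|}$, hence $|\mathbf E(\mathbf x,\mathbf y)|\le e^{|\mathbf x||\mathbf y|}$. Inserting this into (\ref{HeatKernelProduct}) and completing the square yields
\begin{equation*}
 \mathbf h_s(\mathbf x,\mathbf y)\le C\,s^{-\mathbf N/2}\exp\bigl(-(|\mathbf x|-|\mathbf y|)^2/(4s)\bigr),
\end{equation*}
and substituting this into the subordination formula (\ref{subordination}) and computing the resulting Gamma-type integral in $u$ produces
\begin{equation*}
 P_t(\mathbf x,\mathbf y)\le \frac{C\,t}{\bigl(t^2+(|\mathbf x|-|\mathbf y|)^2\bigr)^{(\mathbf N+1)/2}}.
\end{equation*}
This already guarantees $P_t(\mathbf x,\mathbf y)\to 0$ whenever $|\mathbf x|+t\to\infty$ and $\mathbf y$ stays in a bounded set: the regime $t\to\infty$ gives a $t^{-\mathbf N}$ decay uniformly in $\mathbf y$, while the regime $|\mathbf x|\to\infty$ with $t$ bounded gives polynomial decay in $|\mathbf x|-|\mathbf y|$.

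For part (a) I would combine this kernel bound with the identity $\int_{\R^n}P_t(\mathbf x,\mathbf y)\,d\boldsymbol\mu(\mathbf y)=1$, which follows from the stochastic completeness of the heat semigroup via (\ref{subordination}). Given $\delta>0$, choose $R$ so that $|g(\mathbf y)|<\delta$ for $|\mathbf y|>R$ and split
\begin{equation*}
 |P_tg(\mathbf x)|\le \int_{|\mathbf y|\le R}P_t(\mathbf x,\mathbf y)|g(\mathbf y)|\,d\boldsymbol\mu(\mathbf y)+\delta.
\end{equation*}
The first summand is dominated by $\|g\|_{L^1(B_R,d\boldsymbol\mu)}\cdot\sup_{|\mathbf y|\le R}P_t(\mathbf x,\mathbf y)$, and by the preceding paragraph this supremum tends to $0$ as $|\mathbf x|+t\to\infty$. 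Hence $\limsup_{|\mathbf x|+t\to\infty}|P_tg(\mathbf x)|\le \delta$; letting $\delta\to 0$ finishes part (a).

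Part (b) is reduced to (a) by the semigroup identity $P_{t+\varepsilon}f=P_t(P_\varepsilon f)$: one applies (a) to $g:=P_\varepsilon f$, which requires verifying that $P_\varepsilon f$ is locally integrable and vanishes at infinity. Evaluating the kernel bound at $t=\varepsilon$ fixed gives $\|P_\varepsilon f\|_{L^\infty}\le C_\varepsilon\|f\|_{L^1}$, which handles local integrability. For the vanishing at infinity, approximate $f$ in $L^1(\R^n,d\boldsymbol\mu)$ by $f_n\in C_c(\R^n)$; then
\begin{equation*}
 \|P_\varepsilon f-P_\varepsilon f_n\|_{L^\infty}\le C_\varepsilon\|f-f_n\|_{L^1}\longrightarrow 0,
\end{equation*}
while each $P_\varepsilon f_n$ vanishes at infinity, because the compact support of $f_n$ together with the pointwise Poisson bound forces $P_\varepsilon(\mathbf x,\mathbf y)\to 0$ uniformly on $\supp f_n$ as $|\mathbf x|\to\infty$. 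A uniform limit of functions vanishing at infinity still vanishes at infinity, so (a) applies to $g=P_\varepsilon f$ and yields (\ref{DD1}).

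The only nontrivial ingredient is the radial Poisson bound established in the first paragraph; once it is in hand, both (a) and (b) collapse to the classical splitting-plus-density argument that makes the Poisson integral of an $L^1$ function tend to $0$ at infinity in the upper half-space. The Dunkl-specific work is confined to the exponential majorization of $\mathbf E(\mathbf x,\mathbf y)$ and to substituting the homogeneous dimension $\mathbf N$ for $n$.
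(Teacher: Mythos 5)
Your overall strategy coincides with the paper's: split $g$ (or $f$) into a compactly supported piece and a small tail, use $\int_{\R^n}P_t(\mathbf x,\cdot)\,d\boldsymbol\mu=1$ together with a pointwise decay bound on $P_t(\mathbf x,\mathbf y)$ uniform over $\mathbf y$ in a fixed ball, and then reduce part~(b) to part~(a) via $P_{t+\varepsilon}f=P_t(P_\varepsilon f)$. What differs is the route to the kernel estimate. The paper invokes the volume-based bounds
\[
P_t(\mathbf x,\mathbf y)\le \frac{C}{\boldsymbol\mu(B(\mathbf x,t))},\qquad
P_t(\mathbf x,\mathbf y)\le \frac{C_\delta}{\boldsymbol\mu(B(\mathbf x,t))}\Bigl(1+\tfrac{\boldsymbol\mu(B(\mathbf x,|\mathbf x|))}{\boldsymbol\mu(B(\mathbf x,t))}\Bigr)^{-1-\delta}\quad(|\mathbf x|>2n|\mathbf y|),
\]
which it derives from the heat-kernel asymptotics \eqref{heatB} and the doubling inequality \eqref{ComparisonVolumeBalls}. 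You instead deduce directly from $|\mathbf E(\mathbf x,\mathbf y)|\le e^{|\mathbf x||\mathbf y|}$ that $\mathbf h_s(\mathbf x,\mathbf y)\lesssim s^{-\mathbf N/2}e^{-(|\mathbf x|-|\mathbf y|)^2/4s}$ and, after the subordination integral, obtain the radial bound $P_t(\mathbf x,\mathbf y)\lesssim t\,(t^2+(|\mathbf x|-|\mathbf y|)^2)^{-(\mathbf N+1)/2}$. That computation is correct and is a more elementary and self-contained alternative; it is weaker than \eqref{App6}--\eqref{App7} as a pointwise estimate (it ignores the dependence of $\boldsymbol\mu(B(\mathbf x,t))$ on $\mathbf x$ and only captures decay in $|\mathbf x|-|\mathbf y|$ rather than in $\widetilde d(\mathbf x,\mathbf y)$), but since you only apply it with $\mathbf y$ confined to a bounded set, it suffices here. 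Your treatment of part~(b), approximating $f$ in $L^1$ by $C_c$ functions and passing to the uniform limit, is a minor cosmetic variant of the paper's splitting $f=f\chi_{B(0,R)}+f\chi_{B(0,R)^c}$ and appealing to part~(a) for the first piece; both hinge on the same $L^1\to L^\infty$ bound $\|P_\varepsilon f\|_\infty\le C_\varepsilon\|f\|_1$. In short, the argument is correct; the value added by the paper's version is that it reuses machinery (estimates \eqref{App6}--\eqref{App7}) already needed for Theorem~\ref{propositionP}, while your version trades that for a shorter, more classical derivation of the needed decay.
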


\section{Key lemma}
The following lemma, which is perhaps interesting in its own, will play a crucial role in the proof of Theorem \ref{Theorem1}.
\begin{lemma}\label{lemma22}
 For every positive integer $n$ and every  $\varepsilon>0$ there is $\delta>0$ such that for any matrix
 $$B=\left[\begin{array}{cccc}
b_{0,0} & b_{0,1}&...& b_{0,n}\\
b_{1,0} & b_{1,1}&...&b_{1,n} \\
\  & \ &...&\  \\
b_{n,0} & b_{n,1}&...&b_{n,n} \\
\end{array}\right]
$$  with real entries we have
\begin{equation}\label{eq2.30}
 \| B\|^2\leq (1-\delta) \| B\|_{\text{\rm HS}}^2+ \varepsilon \Big( (\text{\rm tr}\,  B)^2+\sum_{i<j} (b_{i,j}-b_{j,i})^2\Big).
\end{equation}
\end{lemma}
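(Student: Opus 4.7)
The approach I would take is a compactness argument combined with a short direct computation on the extremal set. Denote
\begin{equation*}
Q(B) = (\operatorname{tr} B)^2 + \sum_{i<j}(b_{i,j}-b_{j,i})^2,
\end{equation*}
and observe that the three quantities $\|B\|^2$, $\|B\|_{\text{HS}}^2$ and $Q(B)$ are all homogeneous of degree $2$ in $B$. Excluding the trivial case $B=0$, I may therefore divide through by $\|B\|_{\text{HS}}^2$ and restrict attention to the compact sphere $S = \{B : \|B\|_{\text{HS}} = 1\}$ of $(n+1)\times(n+1)$ real matrices. The continuous function $F_\varepsilon(B) := \|B\|^2 - \varepsilon Q(B)$ attains its maximum $M(\varepsilon)$ on $S$, and the lemma is equivalent to the claim that $M(\varepsilon) < 1$; any choice $\delta \in (0, 1-M(\varepsilon)]$ will then satisfy \eqref{eq2.30}.

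Suppose for contradiction that $M(\varepsilon) \geq 1$, and let $B^* \in S$ achieve the maximum. Since $\|B^*\|^2 \leq \|B^*\|_{\text{HS}}^2 = 1$ and $\varepsilon Q \geq 0$, the inequality $F_\varepsilon(B^*) \geq 1$ forces simultaneously
\begin{equation*}
\|B^*\|^2 = \|B^*\|_{\text{HS}}^2 = 1 \qquad\text{and}\qquad Q(B^*) = 0.
\end{equation*}
Equality $\|B^*\| = \|B^*\|_{\text{HS}}$ at a nonzero matrix means that $B^*$ has exactly one nonzero singular value, so $B^* = uv^T$ for some vectors $u,v \in \R^{n+1}$ with $\|u\|\,\|v\| = 1$. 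A direct computation gives $\operatorname{tr} B^* = \langle u,v\rangle$ and $b^*_{i,j} - b^*_{j,i} = u_iv_j - u_jv_i$, and Lagrange's identity
\begin{equation*}
\sum_{i<j}(u_iv_j - u_jv_i)^2 = \|u\|^2\|v\|^2 - \langle u,v\rangle^2
\end{equation*}
then yields $Q(B^*) = \langle u,v\rangle^2 + \|u\|^2\|v\|^2 - \langle u,v\rangle^2 = 1$, contradicting $Q(B^*) = 0$. Hence $M(\varepsilon) < 1$, and rescaling by $\|B\|_{\text{HS}}$ recovers \eqref{eq2.30} with a $\delta$ depending only on $n$ and $\varepsilon$.

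The only nontrivial conceptual ingredient is recognizing that the equality case of $\|B\| \leq \|B\|_{\text{HS}}$, namely the rank-one matrices $B = uv^T$, is exactly the set on which $Q(B) = \|B\|_{\text{HS}}^2$. Thus the penalty term $\varepsilon Q(B)$ dominates precisely in the worst configurations where the Hilbert--Schmidt norm fails to improve on the operator norm, and compactness then produces a uniform defect $\delta$ away from those configurations. I expect the main obstacle, if one instead attempted a quantitative or constructive proof, to be extracting an explicit $\delta(\varepsilon, n)$; the soft compactness argument above bypasses this entirely at the cost of not seeing the explicit dependence.
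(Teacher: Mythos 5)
Your proof is correct, and it takes a genuinely different (and cleaner) route than the paper's. Both are compactness arguments, but the normalizations and the source of the contradiction differ. The paper decomposes $B=A+S$ into antisymmetric and symmetric parts, normalizes $\|S\|_{\text{HS}}=1$, and must first establish an auxiliary inequality (its (2.301)) to control $\|A_n\|_{\text{HS}}$ along the extremizing sequence before passing to a limit; the contradiction there is that a nonzero symmetric trace-zero matrix $S$ cannot satisfy $\|S\|^2\geq\|S\|_{\text{HS}}^2$, which is the known fact cited from Stein's book. You instead normalize $\|B\|_{\text{HS}}=1$ directly, so compactness is immediate and no auxiliary estimate is needed; you then observe that the extremizer, if $M(\varepsilon)\geq1$, must be rank one, $B^*=uv^T$, and Lagrange's identity gives $Q(uv^T)=\|u\|^2\|v\|^2=\|uv^T\|_{\text{HS}}^2>0$, contradicting $Q(B^*)=0$. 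The structural insight you isolate — that the penalty $Q$ is maximal exactly on the equality set of $\|B\|\leq\|B\|_{\text{HS}}$, i.e.\ on the rank-one matrices — is not made explicit in the paper and makes the mechanism of the lemma transparent. The paper's route has the minor advantage of connecting directly to the classical symmetric trace-zero inequality, but yours is shorter and self-contained.
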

Here $\| B\|=\sup_{\boldsymbol x\in\mathbb R^{n+1}, \ \boldsymbol \|\boldsymbol x\|=1} \| B\boldsymbol x\|$ is the ordinary norm of $B$ and $\| B\|_{\rm{HS}}=(\sum_{j=0}^n\sum_{\ell =0}^n b_{j,\ell}^2)^{1\slash 2}$ is the Hilbert-Schmidt norm.
\begin{proof}
Let $S=(s_{i,j})_{i,j=0,1,...,n}$ and $A=(a_{i,j})_{i,j=0,1,...,n}$ denote any symmetric and antisymmetric matrix respectively. It is clear that
\begin{equation}\label{HS1}
\| A+S\|_{\text{\rm HS}}^2=\| A\|_{\text{\rm HS}}^2+ \| S\|_{\text{\rm HS}}^2,
\end{equation}
\begin{equation}\label{eqHS2}
\sum_{i<j}(a_{i,j}-a_{j,i})^2=2\| A\|_{\text{\rm HS}}^2.
\end{equation}
 It is known (see e.g., Section 3.1.2 of Chapter VII of \cite{SteinSing}) that (\ref{eq2.30}) holds for symmetric trace zero  matrixes. Observe that it also holds for antisymmetric matrixes with $\delta=2\varepsilon$. Indeed, from  (\ref{eqHS2}), we get
  \begin{equation*}\begin{split}
   \| A\|^2\leq \| A\|_{\text{\rm HS}}^2 = ( 1-2\varepsilon)\| A\|_{\text{\rm HS}}^2 + \varepsilon \sum_{i<j}(a_{i,j}-a_{j,i})^2.
  \end{split}\end{equation*}
We claim  that for any fixed
 $\varepsilon >0$ and $A$, $S$ such that   $\| A\|_{\rm HS}^2\geq \frac{1}{\varepsilon}$,  $\| S\|_{\rm HS}^2 = 1$,  we have
\begin{equation}\label{eq2.301} \| A+S\|^2 \leq (1-\varepsilon )\| A+S\|_{{\rm HS}}^2+ 2\varepsilon \| A\|_{\text{HS}}^2.
\end{equation}
To see (\ref{eq2.301}) we utilize (\ref{HS1}) and obtain
\begin{equation*}
\begin{split}
 \| A+S\|^2 &\leq \| A+S\|_{\text{\rm HS}}^2 =\| A\|_{\text{\rm HS}}^2+\| S\|_{\text{\rm HS}}^2\\
 &= (1-\varepsilon)\|A\|_{\text{\rm HS}}^2 + \varepsilon \| A\|_{\text{\rm HS}}^2 + 1\\
 &\leq (1-\varepsilon)\|A\|_{\text{\rm HS}}^2 +\varepsilon
 \| A\|_{\text{\rm HS}}^2  + \varepsilon \| A\|_{\text{\rm HS}}^2\\
 &\leq (1-\varepsilon)\|A+S\|_{\text{\rm HS}}^2 +2\varepsilon \| A\|_{\text{\rm HS}}^2.
\end{split}
\end{equation*}
 Since  (\ref{eq2.30}) is homogeneous of degree 2, that is, $\| tB\|^2=t^2\| B\|^2$, and
 $$(1-\delta) \| tB\|_{\text{\rm HS}}^2+ \varepsilon \Big( (\text{\rm tr} \, t B)^2+\sum_{i<j} (tb_{i,j}-tb_{j,i})^2\Big)=t^2\Big( (1-\delta) \| B\|_{\text{\rm HS}}^2+ \varepsilon \Big( (\text{\rm tr} B)^2+\sum_{i<j} (b_{i,j}-b_{j,i})^2\Big)\Big),$$
 it suffices to prove (\ref{eq2.30}) for $B=A+S$ with $S$ running over the unit sphere in the Hilbert-Schmidt norm, that is, $\| S\|_{\text{\rm HS}}^2=1$.
 Assume that (\ref{eq2.30}) does not hold. Then there is  $ \varepsilon>0$ such that for every  $\delta_n =\frac{1}{n}$ there are  $A_n$ and $S_n$, $\| S_n\|_{\rm HS}^2 = 1$, such that
\begin{equation}\label{eq4}
 \| A_n+S_n\|^2 > (1-\delta_n) \| A_n+S_n\|_{\text{\rm HS}}^2 + 2 \varepsilon \| A_n\|_{\text{\rm HS}}^2+\varepsilon (\text{\rm tr}\, S_n)^2.
\end{equation}
  It follows from (\ref{eq2.301}) that  $\| A_n\|_{\rm HS}^2\leq \frac{1}{\varepsilon }$ for large  $n$. Thus  $S_n$ and $A_n$ are in compact sets. There is a subsequence $n_k$ such that  $S_{n_k}$ and  $A_{n_k}$ converge to  $S$ and $A$ respectively.   Moreover,  $\| S\|_{\text{\rm HS}}^2=1$.
 Passing to limit in  (\ref{eq4}) as $k\to\infty$,  we obtain
\begin{equation}\label{eq5}
 \| A+S\|^2 \geq  \| A+S\|_{\text{\rm HS}}^2 + 2\varepsilon \| A\|_{\text{\rm HS}}^2+\varepsilon (\text{\rm tr}\, S)^2.
\end{equation}
The inequality (\ref{eq5}) implies that   $A=0$ and  $\text{tr}\, S=0$. Hence
$\| S\|^2\geq \| S\|_{\text{\rm HS}}^2$, which is impossible for a nonzero symmetric matrix $S$ with $\text{\rm tr}\, S=0$.
\end{proof}

\section{Riesz transforms and Cauchy-Riemann equations}

For a function $f\in L^1(\mathbb R^n, d\boldsymbol \mu)$ such that
$\mathcal R_jf\in L^1(\mathbb R^n, d\boldsymbol \mu)$ we define the functions
\begin{equation}\begin{split}\label{vectoru}
 u_0(t,\mathbf x)&=P_tf(\mathbf x)=
\int_{\mathbb R^n}
                 e^{-t|\boldsymbol \xi|} \mathcal F f(\boldsymbol \xi) \mathbf E(\mathbf x,i\boldsymbol \xi)
                 d\boldsymbol\mu (\boldsymbol \xi), \\
 u_j(t,\mathbf x)&=-P_t(\mathcal R_jf)(\mathbf x)
=-\int_{\mathbb R^n} i\,\frac{\xi_j}{|\boldsymbol \xi|}
                 e^{-t|\boldsymbol \xi|} \mathcal F f(\boldsymbol \xi) \mathbf E(\mathbf x,i\boldsymbol \xi)
                 d\boldsymbol\mu (\boldsymbol \xi).
               \end{split} \end{equation}
The functions $u_j$, $j=0,1,...,n$,   are $ C^\infty$ on $(0,\infty)\times \mathbb R^n$. It is easy to check using (\ref{eq1.3}) and (\ref{vectoru})
that they satisfy the following
Cauchy-Riemann type equations:
\begin{equation}\label{CR}
 \begin{aligned}
  &D_ju_0(t,\mathbf x) =\partial_t u_j(t,\mathbf x), \ j=1,...,n;\\
  &D_ju_\ell (t,\mathbf x)=D_\ell u_j(t,\mathbf x), \ \ j,\ell =1,....,n;\\
  &\partial_t u_0(t,\mathbf x)+ \sum_{j=1}^nD_ju_j(t,\mathbf x)=0.
 \end{aligned}
\end{equation}
From now we shall assume that $f$ is real-valued,  then so are $u_j$, $j=0,1,...,n$.

 Let $\mathcal G$ denote the group of reflections in $\mathbb R^n$  generated by $\sigma_j$, $j=1,...,n$.
 For $\sigma\in \mathcal G$ and a function $u(t,\mathbf x)$ defined on $(0,\infty)\times \mathbb R^n$ we denote
 $u^\sigma(t,\mathbf x)=u(t,\sigma\mathbf x)$.
 If $\mathbf u(t,\mathbf x)=(u_0(t,\mathbf x),u_1(t,\mathbf x),...,u_n(t,\mathbf x))$ satisfies (\ref{CR}),
 then so does
 $\mathbf u^\sigma(t,\mathbf x)=(u_0^\sigma(t,\mathbf x),u_1^\sigma(t,\mathbf x),...,u_n^\sigma(t,\mathbf x))$.

 Moreover, if $(u_0(t,\mathbf x),u_1(t,\mathbf x),...,u_n(t,\mathbf x))$ is of the form  (\ref{vectoru}), then
 $$ u_0^\sigma(t,\mathbf x)=P_t (f^\sigma) (\mathbf x), \ \
 u_j^\sigma (t,\mathbf x)=P_t(\mathcal R_j(f^\sigma))(\mathbf x),$$
 where $f^\sigma(\mathbf x)=f(\sigma\mathbf x)$.

For a $C^2$ function $\mathbf u(t,\mathbf x)=(u_0(t,\mathbf x),u_1(t,\mathbf x),...,u_n(t,\mathbf x))$
satisfying
(\ref{CR}) consider the function $F:(0,\infty)\times \mathbb R^n\to \mathbb R^{(n+1)\cdot |\mathcal G|}$,
$$F(t,\mathbf x)= \{\mathbf u^\sigma (t,\mathbf x)\}_{\sigma\in \mathcal G}.$$
Observe that
$|F(t,\mathbf x)|=|F(t,\sigma\mathbf x)|$ for every $\sigma\in \mathcal G$, where
$$ |F(t,\mathbf x)|^2=\sum_{\sigma\in \mathcal G} \sum_{\ell =0}^n |u^\sigma_\ell (t,\mathbf x)|^2.$$
Our main taks is to prove that
the following proposition, which is an analogue of the classical result (see, e.g., \cite[Section 3.1 of Chapter VII]{SteinSing}).
\begin{proposition}\label{subharmonic}
 There is an exponent $0<q<1$ which depends on $k_1,...,k_n$ such that  the function  $|F|^q$ is $\mathcal L$-subharmonic,
 that is, $\mathcal L(|F|^q)(t,\mathbf x)\geq 0$ on the set
 where $|F|>0$.
\end{proposition}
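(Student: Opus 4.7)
The proof follows the classical Stein--Weiss strategy adapted to the Dunkl setting: compute $\mathcal L(|F|^q)$ via the chain rule, isolate a manifestly nonnegative contribution coming from the reflection-difference part of $\mathbf L$, and control the remaining gradient terms using Lemma \ref{lemma22}.

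First, $|F|^2=\sum_{\sigma\in\mathcal G}|\mathbf u^\sigma|^2$ is $\sigma_j$-invariant for every $j$ (because $\sigma_j$ permutes $\mathcal G$), so the difference part of $\mathbf L$ annihilates $|F|^q$, giving $\mathcal L(|F|^q)=\mathcal L_0(|F|^q)$ with $\mathcal L_0=\partial_t^2+\sum_j(\partial_{x_j}^2+\tfrac{2k_j}{x_j}\partial_{x_j})$. Applying the chain rule with $\gamma$ ranging over $t,x_1,\dots,x_n$ yields
$$
\mathcal L(|F|^q)=q|F|^{q-2}\sum_\gamma|\partial_\gamma F|^2+q(q-2)|F|^{q-4}\sum_\gamma(F\cdot\partial_\gamma F)^2+q|F|^{q-2}\,F\cdot\mathcal L_0 F.
$$
The equations (\ref{CR}) together with commutativity of the Dunkl operators force $\mathcal L u_\ell=0$, and since $\mathcal L$ commutes with each reflection also $\mathcal L u_\ell^\sigma=0$. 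From $\mathcal L_0=\mathcal L+\sum_j\tfrac{k_j}{x_j^2}(I-\sigma_j)$ we read off $\mathcal L_0 u_\ell^\sigma=\sum_j\tfrac{k_j}{x_j^2}\bigl(u_\ell^\sigma-(u_\ell^\sigma)^{\sigma_j}\bigr)$, and summing over components gives
$$
F\cdot\mathcal L_0 F=\sum_j\frac{k_j}{x_j^2}\bigl(|F|^2-F\cdot F^{\sigma_j}\bigr)\ge 0
$$
by Cauchy--Schwarz (using $|F^{\sigma_j}|=|F|$). This positive quantity is the budget that will absorb the corrections appearing below.

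Dividing the chain-rule expansion by $q|F|^{q-2}$ and using $|MF|^2\le|F|^2\|M\|^2$ for the Jacobian $M$ with entries $M_{\gamma,\alpha}=\partial_\gamma F_\alpha$, the desired subharmonicity $\mathcal L(|F|^q)\ge 0$ reduces to the pointwise matrix inequality
$$
(2-q)\|M\|^2\le\|M\|_{\mathrm{HS}}^2+\sum_j\frac{k_j}{x_j^2}\bigl(|F|^2-F\cdot F^{\sigma_j}\bigr).
$$
Decomposing $M$ blockwise by $\sigma$ into the $(n+1)\times(n+1)$ Jacobians $M^\sigma$ of $\mathbf u^\sigma$, we have $\|M\|_{\mathrm{HS}}^2=\sum_\sigma\|M^\sigma\|_{\mathrm{HS}}^2$ and $\|M\|^2\le\sum_\sigma\|M^\sigma\|^2$, so it suffices to estimate each $M^\sigma$ separately. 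A direct computation reveals that $\mathbf u^\sigma$ satisfies (\ref{CR}) only after the sign-adjustment $\tilde u_\ell^\sigma=\epsilon_\ell(\sigma)u_\ell^\sigma$, where $\epsilon_\ell(\sigma)=\pm 1$ records whether $\sigma$ flips the $\ell$-th coordinate; this adjustment leaves $|F|^2$, $\|M^\sigma\|_{\mathrm{HS}}^2$, $\|M^\sigma\|^2$, and $|MF|^2$ all unchanged. With this adjustment in force, the identity $\partial_{x_j}=D_j-\tfrac{k_j}{x_j}(I-\sigma_j)$ splits $\widetilde M^\sigma=\widetilde N^\sigma-\widetilde T^\sigma$, where $\widetilde N^\sigma$ is the Dunkl Jacobian, which is symmetric and trace-free by (\ref{CR}); hence the trace and antisymmetric parts of $\widetilde M^\sigma$ are carried entirely by $\widetilde T^\sigma$, whose entries have the form $\tfrac{k_j}{x_j}\bigl(\tilde u_\ell^\sigma-(\tilde u_\ell^\sigma)^{\sigma_j}\bigr)$.

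Applying Lemma \ref{lemma22} to each $\widetilde M^\sigma$ with a small parameter $\varepsilon>0$ then gives $\|\widetilde M^\sigma\|^2\le(1-\delta)\|\widetilde M^\sigma\|_{\mathrm{HS}}^2+\varepsilon R^\sigma$, with $R^\sigma\le C_n\|\widetilde T^\sigma\|_{\mathrm{HS}}^2\le C_n\sum_j\tfrac{k_j^2}{x_j^2}\sum_\ell(\tilde u_\ell^\sigma-\tilde u_\ell^{\sigma\sigma_j})^2$. Summing over $\sigma$ and invoking the identity $|F|^2-F\cdot F^{\sigma_j}=\tfrac12\sum_{\sigma,\ell}(u_\ell^\sigma-u_\ell^{\sigma\sigma_j})^2$ together with the elementary bound $k_j^2\le k_{\max}k_j$, one obtains $\sum_\sigma R^\sigma\le C_{\mathbf k,n}\sum_j\tfrac{k_j}{x_j^2}\bigl(|F|^2-F\cdot F^{\sigma_j}\bigr)$. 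Choosing $\varepsilon$ small enough that $(2-q)\varepsilon C_{\mathbf k,n}\le 1$ and then $q\in(0,1)$ close to $1$ so that $(2-q)(1-\delta(\varepsilon))\le 1$, both constraints are jointly satisfiable, and their combination yields the required inequality, hence $\mathcal L(|F|^q)\ge 0$. The principal obstacle in this last step is the delicate bookkeeping that trades the $k_j^2/x_j^2$ coefficients appearing in $\widetilde T^\sigma$ against the $k_j/x_j^2$-weighted budget from $F\cdot\mathcal L_0 F$, together with the need to verify that the sign-adjustment restoring (\ref{CR}) for $\mathbf u^\sigma$ is genuinely compatible with all the norm identities used.
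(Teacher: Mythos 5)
The proposal is correct and follows essentially the same strategy as the paper: chain-rule expansion of $\mathcal L(|F|^q)$, the Cauchy--Riemann system (\ref{CR}) to control the Laplacian term and the trace/antisymmetric parts of the Jacobian, and Lemma \ref{lemma22} applied blockwise to the Jacobians $B_\sigma$ of $\mathbf u^\sigma$. You additionally notice, correctly, that $\mathbf u^\sigma$ satisfies (\ref{CR}) only after the sign-adjustment $\tilde u_\ell^\sigma=\epsilon_\ell(\sigma)u_\ell^\sigma$ --- a subtlety the paper glosses over when it asserts that $\mathbf u^\sigma$ satisfies (\ref{CR}) directly --- but the oversight is harmless because conjugating by $\operatorname{diag}(\epsilon_\ell(\sigma))$ leaves $\|B_\sigma\|$ and $\|B_\sigma\|_{\mathrm{HS}}$ unchanged and the final bounds in (\ref{eq368})--(\ref{eq369}) depend only on squared quantities and so are insensitive to these $\pm1$ factors.
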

\begin{proof} Observe that $|F|^q$ is $C^2$ on the set where $|F|>0$.
 Let $\cdot$ denote the inner product in $\mathbb R^{(n+1)\cdot |\mathcal G|}$. In order to unify  our notation we denote the variable $t$ by $x_0$.  For $j=0,1,...,n$,  we have
 \begin{equation*}\begin{aligned}
        \partial_{x_j} |F|^q &=q|F|^{q-2}\Big((\partial_{x_j} F)\cdot F\Big)\\
        \partial_{x_j}^2 |F|^q&=q(q-2)|F|^{q-4}\Big((\partial_{x_j}F)\cdot F\Big)^2 +q|F|^{q-2}\Big((\partial_{x_j}^2F)\cdot
        F+|\partial_{x_j}F|^2\Big).
        \end{aligned}
   \end{equation*}
Recall that $|F(x_0,\mathbf x)|=|F(x_0,\sigma\mathbf x)|$. Hence,
\begin{equation}\label{eq366}
 \begin{aligned}
\mathcal L|F|^q&=  q(q-2)|F|^{q-4}\Big\{ \Big((\partial_{x_0}F)\cdot F\Big)^2+
\sum_{j=1}^n\Big((\partial_{x_j}F)\cdot F\Big)^2\Big\}\\
& \ + q|F|^{q-2}\Big\{ \Big(\partial_{x_0}^2 F+\sum_{j=1}^n\big(\partial_{x_j}^2F+
\frac{2k_j}{x_j}(\partial_{x_j} F)\big)\Big)\cdot F+|\partial_{x_0}F|^2+\sum_{j=1}^n|\partial_{x_j} F|^2 \Big\}.\\
\end{aligned}
\end{equation}
 Since $D_jD_\ell f=D_\ell D_j f$ for $f\in C^2(\mathbb R^n)$, we conclude from (\ref{CR})   that for $\ell =0,1,...,n$ and $\sigma\in \mathcal G$ we have
 $$  \partial_{x_0}^2 u^\sigma_\ell +\sum_{j=1}^n\big(\partial_{x_j}^2 u^\sigma_\ell +
\frac{2k_j}{x_j}(\partial_{x_j} u^\sigma_\ell )\big)=\sum_{j=1}^n \frac{k_j}{x_j^2}(u_\ell ^\sigma-u_\ell ^{\sigma\sigma_j}).$$
Thus,
\begin{equation}\label{eq367}\begin{split}
\Big(\partial_{x_0}^2 F+\sum_{j=1}^n\big(\partial_{x_j}^2F+
\frac{2k_j}{x_j}(\partial_{x_j} F)\big)\Big)\cdot F
&=\sum_{\sigma\in \mathcal G} \sum_{\ell =0}^n\sum_{j=1}^n
\frac{k_j}{x_j^2} (u_\ell^\sigma-u_\ell^{\sigma\sigma_j})u_\ell^\sigma\\
&= \sum_{j=1}^n \sum_{\ell=0}^n \sum_{\sigma\in \mathcal G}
\frac{k_j}{x_j^2} (u_\ell^\sigma-u_\ell^{\sigma\sigma_j})u_\ell^\sigma\\
&=\frac{1}{2}\sum_{j=1}^n \sum_{\ell =0}^n \sum_{\sigma\in \mathcal G}
\frac{k_j}{x_j^2} (u_\ell^\sigma-u_\ell^{\sigma\sigma_j})^2\\
&=\frac{1}{2}\sum_{\sigma\in \mathcal G}
\sum_{j=1}^n \sum_{\ell =0}^n
\frac{k_j}{x_j^2} (u_\ell^\sigma-u_\ell^{\sigma\sigma_j})^2.\\
\end{split}
\end{equation}
Thanks to (\ref{eq366}) and (\ref{eq367}), it suffices to prove that there is $0<q<1$ such that
\begin{equation}\label{eq3.1}
 \begin{aligned}
  &(2-q)\Big\{ \Big((\partial_{x_0}F)\cdot F\Big)^2+ \sum_{j=1}^n\Big((\partial_{x_j}F)\cdot F\Big)^2\Big\}\\
  &\leq \frac{1}{2}
  |F|^2 \sum_{\sigma\in \mathcal G}
\sum_{j=1}^n \sum_{\ell =0}^n
\frac{k_j}{x_j^2} (u_\ell^\sigma-u_\ell^{\sigma\sigma_j})^2
 +|F|^{2}\Big(|\partial_{x_0}F|^2+\sum_{j=1}^n|\partial_{x_j} F|^2\Big).
 \end{aligned}
\end{equation}
Denote
$$B_\sigma=\left[\begin{array}{cccc}
\partial_{x_0} u^\sigma_0 &\partial_{x_0}u^\sigma _1& ... &\partial_{x_0}  u_n^\sigma\\
\partial_{x_1} u_0^\sigma &\partial_{x_1} u_1^\sigma & ... &\partial_{x_1}  u_n^\sigma\\
 \  & \  & ... & \ \\
 \partial_{x_n} u_0^\sigma &\partial_{x_n} u_1^\sigma & ... &\partial_{x_n}  u_n^\sigma\\
\end{array}\right].$$
Let $\mathbf B=\{ B_\sigma\}_{\sigma\in \mathcal G}$ be matrix with $n+1$ rows and $(n+1)\cdot |\mathcal G|$ columns. It represents a linear operator from $\mathbb R^{(n+1)\cdot |\mathcal G|}$ into $\mathbb R^{n+1}$.

Observe that
$$ (2-q)\Big\{ \Big((\partial_{x_0}F)\cdot F\Big)^2+ \sum_{j=1}^n\Big((\partial_{x_j}F)\cdot F\Big)^2\Big\}
\leq (2-q)|F|^2  \| \mathbf B\|^2 ,$$
$$ | F|^{2}\Big(|\partial_{x_0}F|^2+\sum_{j=1}^n|\partial_{x_j} F|^2\Big) = |F|^2\| \mathbf B\|_{\text{\rm HS}}^2.$$
Clearly,
$$\| \mathbf B\|^2\leq \sum_{\sigma\in \mathcal G} \| B_\sigma\|^2, \ \ \ \| \mathbf B\|_{\text{\rm HS}}^2=\sum_{\sigma\in \mathcal G}\| B_\sigma\|_{\text{\rm HS}}^2.$$
Therefore the inequality  (\ref{eq3.1}) will be proven if we show that
\begin{equation}
\label{eq3.2}
(2-q)\sum_{\sigma\in \mathcal G}\|B_{\sigma}\|^2\leq \sum_{\sigma\in \mathcal G}  \| B_\sigma\|_{\text{\rm HS}}^2+ \frac{1}{2}
   \sum_{\sigma\in \mathcal G}
\sum_{j=1}^n \sum_{\ell =0}^n
\frac{k_j}{x_j^2} (u_\ell^\sigma-u_\ell^{\sigma\sigma_j})^2.
\end{equation}
Applying the Cauchy-Riemann type equations (\ref{CR}), we obtain
\begin{equation}\label{eq368}
( \text{\rm tr} B_\sigma )^2
= \Big(-\sum_{j=1}^n \frac{k_j}{x_j}(u_j^\sigma -u_j^{\sigma\sigma_j})\Big)^2
\leq \Big(\sum_{s=1}^n k_s\Big)\Big(\sum_{j=1}^n \frac{k_j}{x_j^2} (u_j^{\sigma} -u_j^{\sigma\sigma_j})^2\Big) ,
\end{equation}
\begin{equation}\label{eq369}\begin{split}
 \sum_{i<j} (\partial_{x_i}u^\sigma_{j}-\partial_{x_j}u^\sigma_i)^2 & =\sum_{j=1}^n \frac{k_j^2}{x_j^2}(u^\sigma_0-u^{\sigma\sigma_j}_0)^2 +\sum_{1\leq i<j} \Big(\frac{k_i}{x_i}
 (u_j^\sigma -u_j^{\sigma\sigma_i})-\frac{k_j}{x_j}(u^\sigma_i-u^{\sigma\sigma_j}_i)\Big)^2\\
 &\leq 2\Big(\sum_{s=1}^n k_s\Big)\Big(\sum_{i=0}^n \sum_{j=1}^n \frac{k_j}{x_j^2}(u_i^\sigma-u_i^{\sigma\sigma_j})^2\Big).
\end{split} \end{equation}
Using  Lemma \ref{lemma22} together with (\ref{eq368}) and (\ref{eq369}) we have that for every $\varepsilon >0$  there is $\delta>0$ such that
\begin{equation}\begin{split}\label{eq3.44}
 \sum_{\sigma\in \mathcal G} \| B_\sigma\|^2 &\leq (1-\delta)\sum_{\sigma\in \mathcal G}\| B_\sigma\|_{\text{\rm HS}}^2 +3\varepsilon \Big(\sum_{s=1}^n k_s\Big) \sum_{\sigma\in \mathcal G} \Big(\sum_{i=0}^n \sum_{j=1}^n \frac{k_j}{x_j^2}(u_i^\sigma-u_i^{\sigma\sigma_j})^2\Big).
\end{split}\end{equation}
Taking $\varepsilon>0$ such that $3\varepsilon  \sum_{s=1}^n k_s\leq \frac{1}{4}$ and utilizing (\ref{eq3.44}) we deduce that (\ref{eq3.2}) holds with $(1-\delta)\leq (2-q)^{-1}$.
\end{proof}

\section{Maximum principle}

On $\mathbb R_+\times \mathbb R^n$ let
$$\widetilde {\mathbb L}=\frac{\partial^2}{\partial x_0^2}+
\sum_{j=1}^n \Big(\frac{\partial^2}{\partial x_j^2}+\frac{2k_j}{x_j}\frac{\partial}{\partial x_j}\Big).$$

\begin{proposition}\label{MaximumPrinciple}
 Let $f(x_0,x_1,...,x_n)$ be a $C^2$ function defined on an open connected set
 $\Omega \subset (0,\infty)\times \mathbb R^n$.
 Assume that $f(x_0,x_1,...,-x_j,...,x_n)= f(x_0,x_1,...,x_j,...,x_n)$ whenever
 $(x_0,x_1,...,-x_j,...,x_n)$ and $(x_0,x_1,...,x_j,...,x_n)$ belong to $\Omega$
 and  $\widetilde{\mathbb L} f\geq 0$ on the set
 $\{(x_0,...,x_n)\in \Omega: x_1\cdot x_2\cdot ...\cdot x_n\ne 0\}$.
 Then $f$ cannot attain a local maximum in $\Omega$ unless $f$ is a constant.
\end{proposition}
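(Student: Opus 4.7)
The proof goes by strong induction on $m := |J(P)|$, where $J(P) := \{j \in \{1,\ldots,n\} : P_j = 0\}$ and $P = (P_0,\ldots,P_n)$. Set $M = \sup_{\Omega} f$, assumed attained, and $E = \{x \in \Omega : f(x) = M\}$; $E$ is closed and $\Omega$ is connected, so it suffices to show $E$ is open. Fix $P \in E$ and a closed ball $\overline{B(P,r)} \subset \Omega$ with $r$ small enough that $x_j$ keeps a fixed nonzero sign on $\overline{B(P,r)}$ for every $j \notin J(P)$. The base case $m = 0$ is classical: near $P$ the operator $\widetilde{\mathbb L}$ is uniformly elliptic with smooth coefficients, and the standard strong maximum principle gives $f \equiv M$ in a neighborhood of $P$.

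For the inductive step $m \geq 1$, if some $P' \in E \cap B(P,r)$ has $|J(P')| < m$ arbitrarily close to $P$, the inductive hypothesis provides a neighborhood of $P'$, containing $P$, inside $E$, and we are done. Otherwise, shrinking $r$, we have $f < M$ on $B(P,r) \setminus \{x \,:\, x_j = 0\;\forall j \in J(P)\}$; in particular, on the open orthant-piece $B^+ := B(P,r) \cap \{x_j > 0 : j \in J(P)\}$, either $f$ attains $M$ at an interior point — in which case the classical strong maximum principle on $B^+ \subset \Omega \setminus \bigcup_j\{x_j = 0\}$ together with the symmetries $f \circ \sigma_j = f$, $j \in J(P)$, and continuity force $f \equiv M$ on $B(P,r)$ — or $f < M$ throughout $B^+$, which is the case I must rule out.

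To do so I use a Hopf-type boundary-point argument. Pick $j_0 \in J(P)$, set $Q = P + \rho e_{j_0}$ with small $\rho < r/2$, and consider the barrier
\[
v(x) = e^{-\alpha|x-Q|^2} - e^{-\alpha\rho^2}
\]
on the annulus $A = \{\rho/2 < |x-Q| < \rho\}$. A direct computation gives
\[
\widetilde{\mathbb L}v = \Bigl\{-2\alpha(n+1) + 4\alpha^2|x-Q|^2 - 4\alpha\sum_{j=1}^{n} k_j\,\frac{x_j - Q_j}{x_j}\Bigr\} e^{-\alpha|x-Q|^2}.
\]
For $j \in J(P) \setminus \{j_0\}$, $Q_j = 0$, so $(x_j - Q_j)/x_j = 1$; for $j \notin J(P)$, $x_j$ is bounded away from $0$; for $j = j_0$, $x_{j_0} - Q_{j_0} = x_{j_0} - \rho < 0$ on $A$, so $4\alpha k_{j_0}(\rho/x_{j_0} - 1) \to +\infty$ as $x_{j_0} \to 0^+$ and the singular drift actually works in our favour. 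Hence $\widetilde{\mathbb L}v > 0$ on $A$ for $\alpha$ large. Set $u := (M - f) - \varepsilon v$. Since $Q_j = P_j = 0$ for $j \in J(P) \setminus \{j_0\}$, both $f$ and $v$ are even in each such $x_j$, so $\widetilde{\mathbb L}$ extends continuously across $\{x_j = 0\}$, $j \in J(P) \setminus \{j_0\}$ — the term $(2k_j/x_j)\partial_{x_j}$ becoming $2k_j\partial_{x_j}^2$ at $x_j = 0$ — and in this extended sense $\widetilde{\mathbb L}u < 0$ on $A$. An interior minimum of $u$ on $A$ would force $\widetilde{\mathbb L}u \geq 0$ at that point (the extended coefficient $1 + 2k_j$ is positive), a contradiction. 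On the outer sphere $v = 0$ and $u = M - f \geq 0$; every point $P'$ on the inner sphere $|x-Q|=\rho/2$ has $x_{j_0} \geq \rho/2$, so $|J(P')| < m$, and the standing inductive assumption yields $f(P') < M$, hence $M - f \geq c_0 > 0$ uniformly on the inner sphere by compactness, and $u \geq 0$ there for $\varepsilon$ small. Thus $u \geq 0$ on $\overline{A}$, i.e.\ $\varepsilon v \leq M - f$. Differentiating at $P$ along $e_{j_0}$ yields $\partial_{x_{j_0}} f(P) \leq -2\alpha\varepsilon\rho e^{-\alpha\rho^2} < 0$, contradicting $\partial_{x_{j_0}} f(P) = 0$, which follows from the symmetry $f(\ldots, -x_{j_0}, \ldots) = f(\ldots, x_{j_0}, \ldots)$.

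\textbf{Main obstacle.} The chief difficulty is that the drift $2k_j/x_j$ is singular on $\{x_j = 0\}$, so the classical Hopf lemma is not directly applicable; worse, when $m \geq 2$ the natural annulus centered at $P + \rho e_{j_0}$ unavoidably crosses several singular hyperplanes. Three ingredients are combined to handle this: (a) the choice $Q_j = 0$ for $j \in J(P) \setminus \{j_0\}$, which forces both $f$ and $v$ to be even across the other singular hyperplanes so that $\widetilde{\mathbb L}$ extends continuously there to a non-degenerate elliptic operator; (b) the favourable sign of the singular $j = j_0$ contribution to $\widetilde{\mathbb L}v$; and (c) the strong induction on $|J(P)|$, which is what guarantees $M - f \geq c_0 > 0$ on the inner sphere whose points have strictly fewer vanishing coordinates than $P$. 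The induction bookkeeping and the careful extension of $\widetilde{\mathbb L}$ across the singular strata are the main technical load.
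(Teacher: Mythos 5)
Your approach — a Hopf-type barrier combined with strong induction on the number of vanishing coordinates — is genuinely different from the paper's, which instead constructs an explicit $\widetilde{\mathbb L}$-harmonic function $g_0$ from integrated heat kernels and runs a Green's-identity argument on the level sets $D_R=\{g_0>R\}$, obtaining the mean-value-type inequality $\int_{\partial D_R}(f(\boldsymbol a)-f)\,v\,\partial_{\boldsymbol n}g\,ds\ge 0$. Your barrier route is more elementary and does not require the explicit heat kernel, and the barrier computation, the continuous extension of $\widetilde{\mathbb L}$ across the strata $\{x_j=0\}$, $j\in J(P)\setminus\{j_0\}$, and the final Hopf differentiation at $P$ are all correct. (Two cosmetic slips: $x_{j_0}-\rho$ is not always negative on $A$ — it only matters that the singular contribution is bounded below, which it is; and the bound $f<M$ on the inner sphere is really supplied by the ``Otherwise'' reduction, not by the induction hypothesis directly.)

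There is, however, a genuine gap in your Case 1: the sentence ``the inductive hypothesis provides a neighborhood of $P'$, \emph{containing} $P$, inside $E$'' is not justified. The induction hypothesis gives $B(P',s)\subset E$ for \emph{some} $s=s(P')>0$, but there is no reason why $s$ should exceed $|P'-P|$; as $P'\to P$ the radius $s(P')$ may shrink, so the neighborhoods need not cover $P$. The correct way to close Case 1 uses the ingredients that you have actually placed (vacuously) in your ``Otherwise'' sub-case: reflect $P'$ across the hyperplanes $\{x_j=0\}$, $j\in J(P)$, so that, using the evenness of $f$, one may assume $P'\in\overline{B^+}$; the induction-hypothesis ball $B(P',s)$ then meets $B^+$, so $f$ attains $M$ at an interior point of the connected set $B^+\subset\Omega\setminus\bigcup_j\{x_j=0\}$ where $\widetilde{\mathbb L}$ is smooth and uniformly elliptic; the classical strong maximum principle then gives $f\equiv M$ on $B^+$, and evenness plus continuity propagate this to all of $B(P,r)$. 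With this repair (equivalently: make the dichotomy ``$f\equiv M$ on $B^+$'' versus ``$f<M$ on $B^+$'' the primary case split, and use the induction hypothesis only to upgrade ``$f<M$ on $B^+$'' to ``$f<M$ on the inner sphere of $A$''), the argument is complete and constitutes a valid alternative proof.
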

 \begin{proof} The proposition is a corollary of Theorem 4.2 of \cite{Roesler2}. For the convenience of the reader  we present here an alternative proof based one  ideas from \cite{MucStein}, where the one dimensional Bessel operator was considered.

  Denote $\boldsymbol x=(x_0,\mathbf x)=(x_0,x_1,...,x_n)\in (0,\infty)\times \mathbb R^n$,  $U=\{\boldsymbol x\in (0,\infty)\times \mathbb R^n: x_1\cdot x_2\cdot ...\cdot x_n\ne 0\}$. Set
 $$v(x_0,x_1,...,x_n)
 =|x_1|^{2k_1}|x_2|^{2k_2}\cdot ...\cdot |x_n|^{2k_n}.$$
 By the divergence theorem
  for $C^2$ functions $f$ and   $g$ in a smooth region $ \bar D$ one has
  \begin{equation}\label{eq3.55}
   \int_D [g\, \text{div}(v\nabla f) - f\, \text{div}(v\nabla g)]\, dx_0\, dx_1...dx_n=
   \int_{\partial D} v\Big(g\frac{\partial f}{\partial \boldsymbol n}-f\frac{\partial g}{\partial \boldsymbol n}\Big)\, ds,
  \end{equation}
  where $\boldsymbol n$ is outward normal vector to $D$ at $\boldsymbol x\in \partial D$. Observe that
  $ \text{div}(v\nabla f)=v\,\widetilde{\mathbb L}f$ on $D\cap U$.
  So, if $g$ is additionally  $\widetilde{\mathbb L}$-harmonic on  $ D\cap U$ then $\text{div} (v\nabla g)=v\widetilde{\mathbb L} g=0$ on $D\cap U$, and
  setting $f\equiv 1$ in (\ref{eq3.55}) we get
  \begin{equation}\label{eq3.7}
   \int_{\partial D}v \frac{\partial g}{\partial \boldsymbol n}\, ds =0.
  \end{equation}

  Assume that at    $\boldsymbol a=(a_0,a_1,...,a_n)\in \Omega$ the function $f$ attains a local maximum.
  By Hopf's maximum principle (see \cite[Section 6.4.2, Theorem 3]{Evans})  $\boldsymbol a$ is not a regular point of $\widetilde{\mathbb L}$, that is,
  $a_1\cdot a_2\cdot...\cdot a_n=0$.
  There is no loss of generality in assuming that there is  $m\in\{1,2,...,n\}$ such that  $a_0>0,..., a_{m-1}>0$,
  $a_{m}=a_{m+1}=...=a_n=0$.
 Let
 $$h^{[0]}_\tau(x_0,a_0)=\frac{1}{\sqrt{4\pi \tau}}
 \exp (-|x_0-a_0|^2\slash 4\tau),$$
 $$h^{[j]}_\tau (x_j,0)=\frac{1}{\Gamma (\lambda_j+1\slash 2)} \tau^{-k_j-1\slash 2}
 \exp(-x^2_j\slash 4\tau ), \ \ j\in \{m,m+1,...,n\},$$
 $$h^{[j]}_\tau (x_j,a_j)={\tt h}_\tau^{[j]}(x_j,a_j)  \ \ \text{for }
 j\notin \{0,m,m+1,..., n\}$$
 (see (\ref{heatBessel})). Put
 \begin{equation*}\begin{split}
  g_0(x_0,x_1,...,x_n)
 =\int_0^\infty \prod_{j=0}^{n} h_\tau ^{[j]}(x_j,a_j)\, d\tau .
\end{split} \end{equation*}
 We have  $\widetilde{\mathbb L} g_0=0$ on $\big((0,\infty)^{m}\times \mathbb R^{n+1-m}\big)\cap U$.
It is not difficult to check using the asymptotic behavior of the Bessel functions
$I_{\nu}$ (see, e.g., \cite{NIST}) that there is $r>0$
 such that $\nabla g_0 (\boldsymbol x)\ne 0$ for every $\boldsymbol x\in B(\boldsymbol a,2r)\setminus \{ \boldsymbol a\}\subset D$.
 Let  $D_R=\{ \boldsymbol x: g_0(\boldsymbol x)>R\}\cup\{ \boldsymbol a\}$. We take  $R$ large enough such that  $D_R\subset B(\boldsymbol a,r)$. For  $\varepsilon >0$ small enough let $D_{R,\varepsilon}=
 D_R\setminus B(\boldsymbol a,\varepsilon)$. Set $g(\boldsymbol x)=g_0(\boldsymbol x)-R$. Then $g\equiv 0$ on $\partial D_R$, $g\geq 0$ on
 $D_{R,\varepsilon}$ and $\frac{\partial}{\partial \boldsymbol n} g(\boldsymbol x)<0$ on $\partial D_R$, where $\boldsymbol n$ is outward  normal
 vector to
 $D_R$ at $\boldsymbol x \in \partial D_R$. Using (\ref{eq3.7}) we have
 \begin{equation}
   \int_{\partial D_R}v \frac{\partial g}{\partial \boldsymbol n}\, ds =
   \int_{\partial B(a,\varepsilon)}v \frac{\partial g}{\partial \boldsymbol n}\, ds <0.
  \end{equation}
 Now from (\ref{eq3.55}) we conclude that
 \begin{equation}\begin{split}
 0 & \leq \int_{D_{R,\varepsilon}} g(v\widetilde{\mathbb L} f) \, dx_0dx_1...dx_n\\
 &= \int_{D_{R,\varepsilon}} g\, \text{div }(v\nabla f) \, dx_0dx_1...dx_n\\
 & =-\int_{\partial D_R}  fv\frac{\partial g}{\partial \boldsymbol n}ds
 -\int_{\partial B(\boldsymbol a,\varepsilon)}g v \frac{\partial f}{\partial \boldsymbol n}\, ds +
 \int_{\partial B(\boldsymbol a,\varepsilon)}f v \frac{\partial g}{\partial \boldsymbol n}\, ds,
\end{split} \end{equation}
where in the last two integrals $\boldsymbol n$ is the outward normal vector to $B(\boldsymbol a,\varepsilon)$.
 Clearly, the second summand tends to 0 as $\varepsilon $ tends to 0. On the other hand, by (\ref{eq3.7}),  the third summand tends to
 $f(\boldsymbol a) \int_{\partial D_R}  v\frac{\partial g}{\partial \boldsymbol n}ds$.
 Thus,
 \begin{equation}\label{eq3.45}
  0\leq \int_{\partial D_R} (f(\boldsymbol a)- f)v\frac{\partial g}{\partial \boldsymbol n}ds.
 \end{equation}
 Recall that $f$ attains  a local maximum at $\boldsymbol a$ and  $\frac{\partial g}{\partial\boldsymbol n}<0$ on $\partial D_R$.
 Hence, from (\ref{eq3.45}) we deduce that  $f=f(\boldsymbol a)$ on $\partial D_R$.
 So  $f$ must be a constant in  a neighborhood of $\boldsymbol a$ and, consequently, $f\equiv f(\boldsymbol a)$ on $\Omega$,
 since $\Omega$ is connected.
 \end{proof}

\section{Proof of Theorem \ref{Theorem1} }

\begin{proof}[Proof of Theorem \ref{Theorem1}] The second inequality in (\ref{eq222}) is a direct consequence of the following multiplier theorem (see \cite[Theorem 1.10]{ABDH}).

\begin{theorem}\label{Theorem2}
Let \,$\chi\ssb=\ssb\chi(\boldsymbol{\xi})$
be a smooth radial function on \,$\R^n$ such that
\begin{equation*}
\chi(\boldsymbol{\xi})=\begin{cases}
\,1&\text{if \;}|\boldsymbol{\xi}|\!\in\!\bigl[\frac12,2\ssf\bigr]\ssf,\\
\,0&\text {if \;}|\boldsymbol{\xi}|\!\notin\!\bigl(\frac14,4\ssf\bigr)\ssf.\\
\end{cases}\end{equation*}
If a function \;$m\ssb=\ssb m(\boldsymbol{\xi})$ \ssf on \,$\R^n$ satisfies
\begin{equation}\label{multiplier222}
M=\,\sup\nolimits_{\,t>0}\,
\|\,\chi\,m(t\,.\,)\ssf\|_{\hspace{.1mm}W_{\ssf2}^{\ssf\mathbf{N}/2\ssf+\ssf\epsilon}}
<+\infty\,,
\end{equation}
for some \,$\epsilon\!>\!0$\ssf,
then the multiplier operator
\begin{equation*}
\mathcal{T}_{\ssf m\ssf}f
=\mathcal{F}^{-1}\{\ssf m\,(\mathcal{F}\ssb f)\}
\end{equation*}
is bounded on the Hardy space \ssf$H^1_{{\rm max}, \mathbf L}$ and
\begin{equation*}
\|\,\mathcal{T}_{\ssf m}\,\|_{H^1_{{\rm max}, \mathbf L}\to\ssf H^1_{{\rm max}, \mathbf L}}\lesssim\,M\ssf.
\end{equation*}
\end{theorem}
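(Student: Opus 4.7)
The plan is to prove Theorem \ref{Theorem2} by a Littlewood-Paley decomposition of $m$, Plancherel-type kernel estimates for each dyadic piece, and the atomic characterization of $H^1_{{\rm max},\mathbf L}$ quoted at the beginning of the introduction. Fix a smooth radial partition of unity $1 = \sum_{j \in \Z} \eta(2^{-j}\boldsymbol\xi)$ on $\R^n \setminus \{0\}$ with $\operatorname{supp}\eta \subset \{|\boldsymbol\xi| \in [\tfrac12, 2]\}$ (so that $\chi \equiv 1$ on $\operatorname{supp}\eta$), set $m_j(\boldsymbol\xi) = m(\boldsymbol\xi)\,\eta(2^{-j}\boldsymbol\xi)$, and write $\mathcal T_m = \sum_j \mathcal T_{m_j}$. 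Hypothesis (\ref{multiplier222}) then gives the scale-invariant bound $\|m_j(2^j\,\cdot\,)\|_{W_2^{\mathbf N/2 + \epsilon}} \lesssim M$ uniformly in $j$.

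The central step is a weighted $L^2$ estimate on the kernels
\begin{equation*}
K_j(\mathbf x, \mathbf y) = \mathbf c_{\mathbf k}^{-1} \int_{\R^n} m_j(\boldsymbol\xi)\, \mathbf E(\mathbf x, i\boldsymbol\xi)\, \mathbf E(\mathbf y, -i\boldsymbol\xi)\, d\boldsymbol\mu(\boldsymbol\xi),
\end{equation*}
namely
\begin{equation*}
\int_{\R^n} |K_j(\mathbf x, \mathbf y)|^2 \bigl(1 + 2^j |\mathbf x - \mathbf y|\bigr)^{2s}\, d\boldsymbol\mu(\mathbf y) \lesssim 2^{j \mathbf N} M^2 \qquad \bigl(s \le \tfrac{\mathbf N}{2} + \epsilon\bigr).
\end{equation*}
This would follow from the Dunkl Plancherel theorem by turning the physical-side weight into a fractional differential operator in $\boldsymbol\xi$ acting on $m_j$; the Sobolev threshold $\mathbf N/2 + \epsilon$ is the precise amount of smoothness that, combined with Cauchy--Schwarz and the growth $\boldsymbol\mu(B(\mathbf x, r)) \lesssim r^{\mathbf N}$, produces an integrable $L^1$-tail for $K_j$ at each scale.

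With the kernel estimate in hand, it suffices to bound $\|\mathcal T_m a\|_{H^1_{\text{atom}}}$ by $CM$ for every atom $a$ supported in a ball $B = B(\mathbf x_0, r)$. Decompose $\mathcal T_m a = \sum_j \mathcal T_{m_j} a$ and split the sum at $2^j r = 1$. For $2^j r \ge 1$ the weighted $L^2$ estimate, combined with Cauchy--Schwarz over dyadic annuli around $\mathbf x_0$, yields a geometric $L^1$-decay factor $(2^j r \cdot 2^k)^{-\epsilon}$ on the annulus $\{2^k r \le |\mathbf x-\mathbf x_0| < 2^{k+1} r\}$; for $2^j r < 1$ the cancellation $\int a\, d\boldsymbol\mu = 0$ combined with the H\"older modulus of continuity of $K_j$ in $\mathbf y$ (again from the Plancherel-Sobolev bound) gains a factor $(2^j r)^\alpha$ for some $\alpha > 0$. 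Summing both regimes produces a molecular representation of $\mathcal T_m a$ in the Coifman-Weiss sense, which in turn converts, on the space of homogeneous type $(\R^n, d, \boldsymbol\mu)$, into an atomic decomposition with $\|\mathcal T_m a\|_{H^1_{\text{atom}}} \lesssim M$.

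The main obstacle is the weighted kernel estimate. In the Euclidean case it follows almost for free from translation invariance and plain Plancherel; here $K_j(\mathbf x, \mathbf y)$ is not a function of $\mathbf x - \mathbf y$, and one has to exploit the eigenfunction identity $D_j^{\boldsymbol\xi} \mathbf E(\mathbf x, i\boldsymbol\xi) = i x_j \mathbf E(\mathbf x, i\boldsymbol\xi)$ to turn polynomial weights into Dunkl-derivatives in $\boldsymbol\xi$. Because each $D_j$ involves a reflection piece, the weights $(1 + |\mathbf x|^2 + |\mathbf y|^2)^s$ that come out of integration by parts must be converted into the metric weight $(1 + 2^j|\mathbf x - \mathbf y|)^{2s}$ driving the atomic structure, and managing this interplay between the reflection group and the Euclidean distance is the bulk of the technical work.
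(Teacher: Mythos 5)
First, a point of comparison: the paper does not prove this statement at all --- it is quoted verbatim as \cite[Theorem 1.10]{ABDH} and used as a black box to get the second inequality in (\ref{eq222}). So your proposal is not an alternative to an argument in this paper but an attempt to reprove the cited result. Your overall template (Littlewood--Paley decomposition, weighted Plancherel estimates on the dyadic kernels $K_j$, molecular estimates on atoms, and the equivalence $H^1_{{\rm max},\mathbf L}=H^1_{\rm atom}$) is the right general shape, and it is broadly the shape of the argument in \cite{ABDH}; the problem is that the one step you identify as central is asserted rather than proved, and as stated it is not available.

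Two concrete gaps. (i) The normalization of your weighted Plancherel estimate is wrong. You invoke ``the growth $\boldsymbol\mu(B(\mathbf x,r))\lesssim r^{\mathbf N}$,'' but the true inequality goes the other way: $\boldsymbol\mu(B(\mathbf x,r))\sim r^{\ssf n}\prod_j(|x_j|+r)^{2k_j}\gtrsim r^{\mathbf N}$, with no uniform upper bound of the form $Cr^{\mathbf N}$ (see (\ref{ComparisonVolumeBalls}), which only compares balls with the same center). Consequently the bound $\int|K_j(\mathbf x,\mathbf y)|^2(1+2^j|\mathbf x-\mathbf y|)^{2s}\ssf d\boldsymbol\mu(\mathbf y)\lesssim 2^{\ssf j\mathbf N}M^2$ is the wrong right-hand side: when you run Cauchy--Schwarz over the annuli $\{|\mathbf x-\mathbf x_0|\sim 2^k r\}$ you need to beat $\boldsymbol\mu(B(\mathbf x_0,2^k r))^{1/2}\ssf 2^{\ssf j\mathbf N/2}$, and $\boldsymbol\mu(B(\mathbf x_0,2^{-j}))\ssf 2^{\ssf j\mathbf N}$ is unbounded as $|\mathbf x_0|\to\infty$. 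The estimate must be normalized by $\boldsymbol\mu(B(\mathbf x,2^{-j}))^{-1}$, and already at $s=0$ obtaining that normalization requires the decay of $|\mathbf E(\mathbf x,i\boldsymbol\xi)|$ in $|x_j\xi_j|$ coming from the Bessel asymptotics, not just $|\mathbf E|\le C$. (ii) More seriously, the weighted estimate with the \emph{Euclidean} weight $(1+2^j|\mathbf x-\mathbf y|)^{2s}$ up to $s=\mathbf N/2+\epsilon$ is precisely what fails to follow from ``Plancherel plus the eigenfunction identity.'' Applying $D_j^{\boldsymbol\xi}$ to $\mathbf E(\mathbf x,i\boldsymbol\xi)\ssf\mathbf E(\mathbf y,-i\boldsymbol\xi)$ produces factors $x_j-y_j$ \emph{and} reflection terms involving $\sigma_j$, so iterating the integration by parts controls the kernel only in terms of the orbit distance $\min_{\sigma\in\mathcal G}|\mathbf x-\sigma\mathbf y|$; and this is not an artifact of the method --- the kernel genuinely carries mass near every reflected point $\sigma\mathbf y$ (compare the third regime of (\ref{heatB}), where the Gaussian decay is in $|x+y|$, with only polynomial decay in $|x-y|$). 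A computation with that regime shows the Euclidean-weighted $L^2$ bound you want cannot hold for all $s\le\mathbf N/2+\epsilon$ once the $k_j$ are large. Bridging the gap between orbit-distance decay of $K_j$ and the Euclidean-ball geometry of the atoms is the actual content of the proof in \cite{ABDH} (which exploits the explicit $\mathbb Z_2^n$ product structure of $E_{k_j}$ and a decomposition of the kernel near each reflected diagonal); your sketch names this difficulty in the last paragraph but offers no mechanism that resolves it.
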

It is not difficult to check that the multiplier $m_j(\boldsymbol \xi)=i \frac{\xi_j}{|\boldsymbol \xi| } $, which corresponds to the Riesz transform $\mathcal R_j$, satisfies (\ref{multiplier222}). Hence $\mathcal R_j$ is bounded from $H^1_{{\rm max}, \mathbf L}$ to itself and, consequently,
form $H^1_{{\rm max}, \mathbf L}$ to $L^1(\mathbb R^n,\, d\boldsymbol \mu)$.

 Now we turn to prove the first inequality in (\ref{eq222}). For this purpose we  use Theorem  \ref{propositionP}, Propositions \ref{propositionP2}, \ref{subharmonic},  and \ref{MaximumPrinciple} combined  with the steps of  the proof of the characterization of  the classical Hardy spaces by the classical Riesz transforms (see, e.g., \cite[Chapter III, Section 4]{Stein}). For the convenience of the reader, we provide the details.

 Assume that $f\in L^1(\mathbb R^n, \, d\boldsymbol\mu )$ and $\mathcal R_jf \in L^1(\mathbb R^n, \, d \boldsymbol\mu )$ for $j=1,2,...,n$.
There is no loss of generality in assuming that $f$ is real valued, and hence so are $\mathcal R_jf$.  Set $\mathbf u (x_0,x_1,...,x_n)=(u_0,u_1,...,u_n)$, where $u_j$ are defined in (\ref{vectoru}) (recall that $x_0=t>0$). Fix $0<q<1$ as in Proposition \ref{subharmonic} and set $p=1\slash q$. Let
$F(x_0,\mathbf x)=\{\mathbf u^\sigma(x_0,\mathbf x)\}_{\sigma\in \mathcal G}$.
Clearly,
 \begin{equation}
  \sup_{x_0>0} \int_{\mathbb R^n} |F(x_0, x_1,...,x_n)|d\boldsymbol \mu (x_1,...,x_n)
  \leq C\Big(\| f\|_{L^1(\mathbb R^n,\, d\boldsymbol\mu )}+\sum_{j=1}^n \| \mathcal R_jf\|_{L^1(\mathbb R^n,\, d\boldsymbol \mu)}\Big).
 \end{equation}
 Denote $F_\varepsilon (x_1,...,x_n)= F(\varepsilon, x_1,...,x_n)$. Then
 $|F_\varepsilon|\in C_0(\mathbb R^n)$ (see part (b) of Proposition \ref{propositionP2}) and, by (\ref{App5}),
 \begin{equation}\label{eq333} \sup_{\varepsilon >0} \| |F_\varepsilon |^q\|^p_{L^p(\mathbb R^n,\,  d\boldsymbol \mu )}\leq C
 \Big(\| f\|_{L^1(\mathbb R^n,\, d\boldsymbol\mu)}+\sum_{j=1}^n \| \mathcal R_jf\|_{L^1(\mathbb R^n, \, d \boldsymbol\mu )}\Big).
 \end{equation}
 Consider
 the function $G:[0,\infty)\times \mathbb R^n \to \mathbb R$,
 $$G(x_0,x_1,...,x_n)= |F(\varepsilon + x_0,x_1,...,x_n)|^q -P_{x_0}
 (|F_\varepsilon| ^q|)(x_1,...,x_n).$$
  The function is continuous vanishes for $x_0=0$ and, by Proposition \ref{propositionP2},
  $$\lim_{(x_0+|(x_1,...,x_n)|)\to\infty}  G(x_0,x_1,...,x_n)=0.$$
 Moreover, $G(x_0,\mathbf x)=G(x_0,\sigma\mathbf x)$ for every $\sigma\in\mathcal G$.
 We claim  that
 \begin{equation}\label{eq334}
 G(x_0,\mathbf x)=|F(\varepsilon+x_0,\mathbf x)|^q -
 P_{x_0}
 (|F_\varepsilon| ^q)(\mathbf x)\leq 0.
 \end{equation}
  To prove the claim assume that $G>0$ at some point. Then it attains a global maximum, say at $\boldsymbol a=(a_0,a_1,...,a_n)$. Obviously,  $a_0>0$ and $|F(\boldsymbol a)|>0$.
  Take a connected neighborhood $\Omega$ of $\boldsymbol a$ such
 that $G>0$ on $\Omega$ and $G$ is not constant on $\Omega$.  Then $G$ is $C^2$ on $\Omega$ and, according to (\ref{harmonic}) and Proposition \ref{subharmonic}, $\mathcal LG=\widetilde{\mathbb L}G=\widetilde{\mathbb L} |F|^q\geq 0$ on
 $\{ (x_0,x_1,...,x_n)\in \Omega : \ x_1\cdot x_2\cdot ...\cdot x_n\ne 0\}$.
 This contradicts  the maximum principle (see Proposition \ref{MaximumPrinciple}). Hence (\ref{eq334}) is proved.

 It follows from (\ref{eq333}) that there is a sequence $\varepsilon_n\to 0$ such that $|F_{\varepsilon_n}|^q$
 converges in a
 weak * topology  of the Banach space $L^p(\mathbb R^n, d\boldsymbol \mu)$ to $h\in L^p(\mathbb R^n, d\boldsymbol \mu ) $ and
 \begin{equation}\label{functionh}\| h\|^p_{L^p(\mathbb R^n,\, d\boldsymbol\mu) }\leq C \Big(\| f\|_{L^1(\mathbb R^n, \, d\boldsymbol\mu ))}+\sum_{j=1}^n \| \mathcal R_jf\|_{L^1(\mathbb R^n, \, d\boldsymbol\mu ))}\Big).
 \end{equation}
 From (\ref{eq334}) and (\ref{Lp}) we conclude that
 \begin{equation}\label{eq5.12} |F(x_0,\mathbf x )|^q \leq P_{x_0} h(\mathbf x).
 \end{equation}
  Since the maximal function
 $P_*$ is bounded on $L^p(\mathbb R^n, \, d\boldsymbol\mu (x))$ (see Theorem \ref{propositionP}), we deduce from (\ref{eq5.12}) and (\ref{functionh})
 that
 \begin{equation*}\begin{split}
\int  \sup_{x_0>0} |u_0(x_0,\mathbf x)|\, d\boldsymbol \mu(\mathbf x)
 &\leq C \int \Big( \sup_{x_0>0} P_{x_0} h(\mathbf x)\Big)^p
 d\boldsymbol \mu(\mathbf x)\\
 &\leq C\| h\|_{L^p(\mathbb R^n, \, d \boldsymbol\mu )}^p\\
 &\leq C (\| f\|_ {L^1(\mathbb R^n, \,d \boldsymbol\mu)}  +\sum_{j=1}^n \| \mathcal R_jf\|_{L^1(\mathbb R^n, \, d \boldsymbol\mu)}).
 \end{split} \end{equation*}
  Finally, from Theorem \ref{propositionP} we get $f\in H^1_{\rm{max},\mathbf L}$ and
  $$\| f\|_{H^1_{\rm{max},\mathbf L}}\leq  C (\| f\|_{L^1(\mathbb R^n, \, d \boldsymbol\mu)} +\sum_{j=1}^n \| \mathcal R_jf\|_{L^1(\mathbb R^n, \, d \boldsymbol\mu)}).$$
\end{proof}
 \

 \section{Proof of Theorem \ref{TheoremBessel}}

\begin{proof}[Proof of Theorem \ref{TheoremBessel}] Recall   that
\begin{equation}\label{App1} \mathbf h_{t}(\mathbf x,\mathbf y)=\prod_{j=1}^n h^{\{j\}}_t (x_j,y_j)
\end{equation}
(see (\ref{DunklKernelProduct}) and (\ref{DunklKernel1D})), where
$$h^{\{j\}}_t (x,y)=(4t)^{-1} \exp(-(x^2+y^2)\slash 4t)|xy|^{-k_j+1\slash 2}\Big(I_{k_j-1\slash 2}\Big(\frac{|xy|}{2t}\Big)+{\rm sgn}(xy)I_{k+1\slash 2}\Big(\frac{|xy|}{2t}\Big)\Big)$$ is the heat kernel associated with one dimensional Dunkl operator
$$L f(x)=\ssb f''(x)\ssb
+\ssb\tfrac{2\ssf k_j}{x}f'(x)\ssb
-\ssb\tfrac{k_j}{x^2}\ssf\bigl(\ssf f(x)\!
-\!f(-x)\ssf\bigr).$$
Clearly, $h^{\{j\}}_{\ssf t}(x,y)=h^{\{j\}}_{\ssf t}(y,x)$ is a $C^\infty$ function of $(t,x,y)$.
For a function $f$ defined on $(0,\infty)^n$ let $\widetilde f$ denote its extention to the  $\mathcal G$ invariant function on $\mathbb R^n$. One can easily check using (\ref{productBessel}), (\ref{heatBessel}) that
$$ (e^{t\mathbb L} f)\widetilde \ =e^{t\mathbf L}(\widetilde f).$$
Hence, $f$ belongs to the Hardy space $H^1_{\rm{max},\mathbb L}$ if and only if $\widetilde f\in H^1_{{\rm max}, \mathbf L}$\,. Moreover, $\| f\|_{H^1_{\rm{max},\mathbb L}}=c\| \widetilde f\|_{H^1_{{\rm max}, \mathbf L}}$. Let us note that
$$|\mathcal R_j \widetilde f|=|(R_j f) \widetilde \ \, |.$$
 Thus Theorem \ref{TheoremBessel} follows from Theorem \ref{Theorem1}.
\end{proof}

\section{Appendix}

It is well known that
\begin{equation}\label{App2}
 \int_{\mathbb R} h^{\{j\}}_{\ssf t}(x,y)\, d\mu_j(y)=1, \ \ \ d\mu_j(y)= |y|^{2k_j}\, dy.
 \end{equation}
It was proved in \cite{ABDH} that   $h^{\{j\}}_t (x,y)$
 has the following global behavior\,{\rm:}
\begin{equation}\label{heatB}
h^{\{j\}}_{\ssf t}(x,y)\,\asymp\,\begin{cases}
\;t^{-k_j-\frac12}\,
e^{-\frac{x^2\ssb+\ssf y^2}{4\ssf t}}
&\text{if \;}|\ssf x\ssf y\ssf|\!\le\ssb t\ssf,\\
\;t^{-\frac12}\,(x\ssf y)^{-k_j}\,
e^{-\frac{(x-y)^2}{4\ssf t}}
&\text{if \;}x\ssf y\ssb\ge\ssb t\ssf,\\
\;t^{\ssf\frac12}\,(-\ssf x\ssf y)^{-k_j-1}\,
e^{-\frac{(x+y)^2}{4\ssf t}}
&\text{if \,}-\ssb x\ssf y\ssb\ge\ssb t\ssf.\\
\end{cases}
\end{equation}
From (\ref{heatB}) we easily conclude that
\begin{equation}\label{App3}
0<\mathbf h_{t}(\mathbf x,\mathbf y)\leq \frac{C}{\boldsymbol \mu(B(\mathbf x,\sqrt{t}))} \ \ \text{for all } \ \mathbf x, \mathbf y \in\mathbb R^n \ \text{and} \ t>0;
  \end{equation}
   \begin{equation}\label{App4} \mathbf h_{t}(\mathbf x,\mathbf y)\leq \frac{C}{\boldsymbol \mu(B(\mathbf x,\sqrt{t}))}
   e^{-c|\mathbf x|^2\slash t}\ \ \ \text{ for } \  |\mathbf x|>2n|\mathbf y|. \\
\end{equation}
We shall need the following inequalities for volumes of the Euclidean balls (see \cite{ABDH})
\begin{equation}\label{ComparisonVolumeBalls}
\bigl(\tfrac Rr\bigr)^{\ssb n}\ssb
\lesssim\tfrac{\boldsymbol{\mu}\ssf(\ssf\mathbf{B}(\mathbf{x},\ssf R\ssf))}
{\boldsymbol{\mu}\ssf(\ssf\mathbf{B}(\mathbf{x},\ssf r))}
\lesssim\ssb\bigl(\tfrac Rr\bigr)^{\ssb\mathbf{N}},
\qquad\forall\;\mathbf{x}\!\in\!\R^n,\,\forall\;R\ssb\ge\ssb r\!>\ssb0\ssf.
\end{equation}
The subordination formula (\ref{subordination}) combined with (\ref{App1}) and (\ref{App2}) implies
\begin{equation}\label{App5}
\int_{\mathbb R^n} P_t(\mathbf x,\mathbf y)\, d\boldsymbol\mu (\mathbf x)=\int_{\mathbb R^n} P_t(\mathbf x,\mathbf y)\, d\boldsymbol\mu (\mathbf y)=1.
\end{equation}
\begin{lemma}
There is a constant $C>0$ such that
\begin{equation}\label{App6}
0< P_t(\mathbf x,\mathbf y)\leq \frac{C}{\boldsymbol \mu (B(\mathbf x, t))}.
\end{equation}
Moreover, for every $0<\delta <\frac{1}{\mathbf N}$ there is a constant $C_\delta$ such that \begin{equation}\label{App7}
 P_t(\mathbf x,\mathbf y)\leq \frac{C_\delta}{\boldsymbol \mu (B(\mathbf x, t))}\Big(1+\frac{ \boldsymbol \mu (B(\mathbf x, |\mathbf x|))}{\boldsymbol\mu (B(\mathbf x,t))}\Big)^{-1-\delta} \ \ \text{for } \ |\mathbf x|>2n|\mathbf y|.
\end{equation}
\end{lemma}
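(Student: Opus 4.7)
\medskip
\noindent\textbf{Proof proposal.} The plan is to derive both (\ref{App6}) and (\ref{App7}) directly from the subordination formula (\ref{subordination}) by feeding in the pointwise heat-kernel estimates (\ref{App3}) for (\ref{App6}) and the Gaussian estimate (\ref{App4}) for (\ref{App7}), using the volume comparison (\ref{ComparisonVolumeBalls}) to control the radii that arise. Positivity of $P_t(\mathbf{x},\mathbf{y})$ is immediate from (\ref{subordination}) since $\mathbf{h}_{t^2/4u}(\mathbf{x},\mathbf{y})>0$.

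For the upper bound (\ref{App6}), inserting (\ref{App3}) into (\ref{subordination}) gives
\begin{equation*}
P_t(\mathbf{x},\mathbf{y})\leq C\int_0^\infty e^{-u}\,\frac{1}{\boldsymbol{\mu}\bigl(B(\mathbf{x},t/(2\sqrt{u}))\bigr)}\,\frac{du}{\sqrt u}.
\end{equation*}
I would split at $u=1/4$. For $u\leq1/4$ the radius $t/(2\sqrt u)\geq t$, so monotonicity of $\boldsymbol{\mu}$ gives $\boldsymbol{\mu}(B(\mathbf{x},t/(2\sqrt u)))\geq \boldsymbol{\mu}(B(\mathbf{x},t))$. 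For $u\geq1/4$ the upper half of (\ref{ComparisonVolumeBalls}) with $R=t$, $r=t/(2\sqrt u)$ yields $\boldsymbol{\mu}(B(\mathbf{x},t))/\boldsymbol{\mu}(B(\mathbf{x},t/(2\sqrt u)))\lesssim(2\sqrt u)^{\mathbf N}$. Factoring out $\boldsymbol{\mu}(B(\mathbf{x},t))^{-1}$ leaves $\int_0^\infty e^{-u}(1+(4u)^{\mathbf N/2})u^{-1/2}\,du<\infty$, proving (\ref{App6}).

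For (\ref{App7}), note first that when $t\geq|\mathbf{x}|$ one has $B(\mathbf{x},|\mathbf{x}|)\subset B(\mathbf{x},t)$, so the factor in parentheses is at most $2$ and (\ref{App7}) follows from (\ref{App6}). Assume $t<|\mathbf{x}|$ and $|\mathbf{x}|>2n|\mathbf{y}|$. Plugging (\ref{App4}) into (\ref{subordination}) and using the change of variable $v=t/(2\sqrt u)$ one obtains
\begin{equation*}
P_t(\mathbf{x},\mathbf{y})\leq C\,t\int_0^\infty\frac{e^{-A^2/v^2}}{\boldsymbol{\mu}(B(\mathbf{x},v))}\,\frac{dv}{v^2},\qquad A:=\tfrac{t}{2}\sqrt{1+4c|\mathbf{x}|^2/t^2}\asymp|\mathbf{x}|.
\end{equation*}
Splitting at $v=A$: on $v\leq A$, applying the upper half of (\ref{ComparisonVolumeBalls}) gives $1/\boldsymbol{\mu}(B(\mathbf{x},v))\leq C(A/v)^{\mathbf N}/\boldsymbol{\mu}(B(\mathbf{x},A))$, and the substitution $w=A/v$ shows that the Gaussian factor beats the polynomial growth, leaving a convergent integral of size $1/(A\,\boldsymbol{\mu}(B(\mathbf{x},A)))$; on $v\geq A$ one simply uses $\boldsymbol{\mu}(B(\mathbf{x},v))\geq\boldsymbol{\mu}(B(\mathbf{x},A))$ together with $\int_A^\infty dv/v^2=1/A$ to obtain the same order. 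Using $A\asymp|\mathbf{x}|$ together with (\ref{ComparisonVolumeBalls}) to replace $\boldsymbol{\mu}(B(\mathbf{x},A))$ by $\boldsymbol{\mu}(B(\mathbf{x},|\mathbf{x}|))$, the end product is
\begin{equation*}
P_t(\mathbf{x},\mathbf{y})\lesssim\frac{t}{|\mathbf{x}|\,\boldsymbol{\mu}(B(\mathbf{x},|\mathbf{x}|))}.
\end{equation*}

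The final step is to convert this estimate into the form stated in (\ref{App7}). Dividing the target inequality by $\boldsymbol{\mu}(B(\mathbf{x},t))^{-1}$ reduces matters to checking that $t/|\mathbf{x}|\lesssim\bigl(\boldsymbol{\mu}(B(\mathbf{x},t))/\boldsymbol{\mu}(B(\mathbf{x},|\mathbf{x}|))\bigr)^{\delta}$. The upper half of (\ref{ComparisonVolumeBalls}) applied with $R=|\mathbf{x}|$, $r=t$ yields $\boldsymbol{\mu}(B(\mathbf{x},t))/\boldsymbol{\mu}(B(\mathbf{x},|\mathbf{x}|))\gtrsim(t/|\mathbf{x}|)^{\mathbf N}$, so the right-hand side is at least a constant times $(t/|\mathbf{x}|)^{\mathbf N\delta}$, and since $t/|\mathbf{x}|\leq1$ the desired inequality holds precisely when $\mathbf N\delta\leq1$, which is exactly the hypothesis $\delta<1/\mathbf N$. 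The main technical obstacle is the bookkeeping in the Gaussian subordination integral: one must use the $\mathbf{N}$-power upper volume comparison for $v\leq A$ and the $n$-power lower one (or monotonicity) for $v\geq A$ in exactly the right way so that both pieces contribute $(A\,\boldsymbol{\mu}(B(\mathbf{x},A)))^{-1}$ and so that the resulting polynomial decay in $t/|\mathbf{x}|$ has the sharp exponent $1$ needed to make the conversion work up to the borderline $\delta=1/\mathbf N$.
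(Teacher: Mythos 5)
Your argument is correct, and for the first bound \eqref{App6} it coincides with the paper's proof (same splitting of the subordination integral at $u=1/4$, same use of \eqref{App3} and \eqref{ComparisonVolumeBalls}). For \eqref{App7} you take a genuinely different, and in fact cleaner, route. The paper never leaves the $u$-variable: it first converts the Gaussian factor $e^{-c|\mathbf x|^2/s}$ of \eqref{App4} into the volume-ratio decay $\bigl(1+\boldsymbol\mu(B(\mathbf x,|\mathbf x|))/\boldsymbol\mu(B(\mathbf x,\sqrt s))\bigr)^{-1-\delta}$ pointwise in the subordination parameter (inequality \eqref{App10}), and only then integrates, splitting at $u=1/4$; there the restriction $\delta<1/\mathbf N$ is what makes the integral $\int_0^{1/4}u^{-\delta\mathbf N/2}\,du/\sqrt u$ converge. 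You instead keep the Gaussian, change variables to $v=t/(2\sqrt u)$, and evaluate the subordination integral sharply to get the closed-form intermediate bound $P_t(\mathbf x,\mathbf y)\lesssim t/\bigl(|\mathbf x|\,\boldsymbol\mu(B(\mathbf x,|\mathbf x|))\bigr)$, performing the conversion to the stated form only at the very end; for you the condition on $\delta$ enters through the final comparison $t/|\mathbf x|\le(t/|\mathbf x|)^{\mathbf N\delta}$. Your version isolates a sharper pointwise estimate (with the borderline exponent $1$, so it even covers $\delta=1/\mathbf N$), at the cost of an extra change of variables; the paper's version is slightly more mechanical but generalizes verbatim to the proof of \eqref{UpperEstimateKr2} later in the appendix, where the quantity being propagated is $\widetilde d(\mathbf x,\mathbf y)$ rather than a Gaussian and no closed form is available. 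Both rest on the same three ingredients (subordination, \eqref{App4}, \eqref{ComparisonVolumeBalls}), and all your individual steps check out.
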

\begin{proof} To see (\ref{App6}) we use (\ref{subordination}) together with (\ref{App3}) and (\ref{ComparisonVolumeBalls}) and obtain
\begin{equation*}\begin{split}
 P_t(\mathbf x,\mathbf y) &\lesssim \int_0^{\frac{1}{4}} \frac{1}{\boldsymbol \mu (B(\mathbf x,t))}
 \frac{\boldsymbol \mu (B(\mathbf x,t))}{\boldsymbol \mu (B(\mathbf x,\frac{t}{2\sqrt{u}}))}\frac{du}{\sqrt{u}}+ \int_{\frac{1}{4}}^\infty  \frac{e^{-u}}{\boldsymbol \mu (B(\mathbf x,t))}
 \frac{\boldsymbol \mu (B(\mathbf x,t))}{\boldsymbol \mu (B(\mathbf x,\frac{t}{2\sqrt{u}}))}\frac{du}{\sqrt{u}}\\
 &\lesssim \frac{1}{\mu (B(\mathbf x,t))} \Big(\int_0^{\frac{1}{4}} u^{n\slash 2}\frac{d}{\sqrt{u}}+\int_{\frac{1}{4}}^\infty e^{-u} u^{\mathbf N\slash 2}\frac{du}{\sqrt{u}}\Big)
 \lesssim  \frac{1}{\mu (B(\mathbf x,t))}.
\end{split}\end{equation*}
The proof of the lower bound of $P_t(\mathbf x,\mathbf y)$ is obvious.

In order to prove (\ref{App7}) it suffices to consider $t\leq |\mathbf x|\slash 2$.
By (\ref{ComparisonVolumeBalls}), for every $\delta>0$ and $c>0$, we have
\begin{equation}\label{App10}
 \Big(1+\frac{\boldsymbol\mu (B(\mathbf x, |\mathbf x|))}{\boldsymbol\mu (B(\mathbf x,\sqrt{s}))} \Big)^{1+\delta}\leq C_\delta\Big(1+\frac{|\mathbf x|}{\sqrt{s}}\Big)^{(1+\delta) \mathbf N} \leq
C_{\delta, c} e^{c|\mathbf x|^2\slash s}, \ \ \text{for } \ s>0.
\end{equation}
Utilizing (\ref{App4}) together with (\ref{App10}) and proceeding similarly to the proof of (\ref{App6}) we have
\begin{equation}\begin{split}\nonumber
 P_t(\mathbf x,\mathbf y) & \lesssim \int_0^\infty \frac{e^{-u}}{\boldsymbol \mu (B(\mathbf x, \frac{t}{2\sqrt{u}}))} \Big(1+\frac{\boldsymbol \mu (B(\mathbf x, |\mathbf x|))}{\boldsymbol\mu (B(\mathbf x, \frac{t}{2\sqrt{u}}))}\Big)^{-1-\delta}\frac{du}{\sqrt{u}}\\
 &= \int_0^{\infty} \frac{e^{-u}}{\boldsymbol \mu (B(\mathbf x,t))}
 \frac{\boldsymbol \mu (B(\mathbf x,t))}{\boldsymbol \mu (B(\mathbf x,\frac{t}{2\sqrt{u}}))} \Big(1+\frac{\boldsymbol \mu (B(\mathbf x, |\mathbf x|))}{\boldsymbol \mu (B(\mathbf x, t))} \frac{\boldsymbol\mu (B(\mathbf x, t))}{\boldsymbol\mu (B(\mathbf x, \frac{t}{2\sqrt{u}}))}\Big)^{-1-\delta} \frac{du}{\sqrt{u}}\\
 &\lesssim \int_0^{\frac{1}{4}} \frac{1}{\boldsymbol \mu (B(\mathbf x,t))}
 \frac{\boldsymbol \mu (B(\mathbf x,t))}{\boldsymbol \mu (B(\mathbf x,\frac{t}{2\sqrt{u}}))} \Big(1+\frac{\boldsymbol \mu (B(\mathbf x, |\mathbf x|))}{\boldsymbol \mu (B(\mathbf x, t))}\Big)^{-1-\delta} \Big( \frac{\boldsymbol\mu (B(\mathbf x, t))}{\boldsymbol\mu (B(\mathbf x, \frac{t}{2\sqrt{u}}))}\Big)^{-1-\delta} \frac{du}{\sqrt{u}}\\
 &
 + \int_{\frac{1}{4}}^{\infty} \frac{e^{-u}}{\boldsymbol \mu (B(\mathbf x,t))}
 \frac{\boldsymbol \mu (B(\mathbf x,t))}{\boldsymbol \mu (B(\mathbf x,\frac{t}{2\sqrt{u}}))} \Big(1+\frac{\boldsymbol \mu (B(\mathbf x, |\mathbf x|))}{\boldsymbol \mu (B(\mathbf x, t))}\Big)^{-1-\delta} \frac{du}{\sqrt{u}}.\\
\end{split}\end{equation}
Fix $0<\delta<\mathbf N^{-1}$. Applying (\ref{ComparisonVolumeBalls}) we obtain
\begin{equation}\begin{split}\nonumber
 P_t(\mathbf x,\mathbf y) &\lesssim \int_0^{\frac{1}{4}} \frac{1}{\boldsymbol \mu (B(\mathbf x,t))} \Big(1+\frac{\boldsymbol \mu (B(\mathbf x, |\mathbf x|))}{\boldsymbol \mu (B(\mathbf x, t))}\Big)^{-1-\delta} u^{-\delta\mathbf N\slash 2} \frac{du}{\sqrt{u}}\\
 &
 + \int_{\frac{1}{4}}^{\infty} \frac{e^{-u}u^{\mathbf N\slash 2}}{\boldsymbol \mu (B(\mathbf x,t))}
 \Big(1+\frac{\boldsymbol \mu (B(\mathbf x, |\mathbf x|))}{\boldsymbol \mu (B(\mathbf x, t))}\Big)^{-1-\delta} \frac{du}{\sqrt{u}},\\
\end{split}\end{equation}
which proves (\ref{App7}).
\end{proof}

 \begin{proof}[Proof of Theorem \ref{propositionP}] The  proof, which is its spirit similar to that of the heat kernel characterization of $H^1_{\rm{atom}}$ (see \cite{ABDH}), is  based on  the following result due to Uchiyama \cite{Uchiyama}.
\begin{theorem}\label{TheoremUchiyama}
Assume that a set \,$X$ is equipped with
\begin{itemize}
\item[$\bullet$]
a \ssf{\rm quasi-distance} \ssf$\widetilde{d}$
\ssf i.e.~a distance except that
the triangular inequality is replaced by the weaker condition
\vspace{.5mm}

\centerline{$
\widetilde{d}\ssf(x,y)\le A\,\{\ssf\widetilde{d}\ssf(x,z)+\widetilde{d}\ssf(z,y)\ssf\},
\qquad\forall\;x,y,z\!\in\!X\ssf;
$}\vspace{.5mm}

\item[$\bullet$]
a measure \,$\mu$ whose values on quasi-balls satisfy
\vspace{.5mm}

\centerline{$
\frac rA\le\mu\ssf(\widetilde{B}\ssf(x,r))\le r,
\qquad\forall\;x\!\in\!X\ssf,\,\forall\;r\!>\!0\,;
$}\vspace{1mm}

\item[$\bullet$]
a continuous kernel \,$K_r(x,y)\!\ge\!0$ such that,
for every \,$r\!>\!0$ and \,$x,y,y^{\ssf\prime}\hspace{-1mm}\in\!X$,
\vspace{1mm}

\begin{itemize}
\item[$\circ$]
$K_r(x,x)\ge\frac1{A\hspace{.5mm}r}$\,,

\item[$\circ$]
$K_r(x,y)\le r^{-1}\ssf\bigl(\ssf
1\ssb+\ssb\frac{\widetilde{d}\ssf(x,\ssf y)}r
\ssf\bigr)^{\ssb-1-\ssf\delta}$\,,

\item[$\circ$]
$\bigl|\ssf K_r(x,y)\ssb-\ssb K_r(x,y^{\ssf\prime})\bigr|\ssb
\le\ssb r^{-1}\ssf\bigl(\ssf1\ssb
+\ssb\frac{\widetilde{d}\ssf(x,\ssf y)}r\ssf\bigr)^{\ssb-1-\ssf2\ssf\delta}\ssf
\bigl(\ssf\frac{\widetilde{d}\ssf(y,\ssf y^{\ssf\prime})}r\ssf\bigr)^{\ssb\delta}$
\ssf when
\,$\widetilde{d}\ssf(y,y^{\ssf\prime})\!
\le\!\frac{r\ssf+\ssf\widetilde{d}\ssf(x,\ssf y)}{4\ssf A}$\,.
\vspace{.5mm}

\end{itemize}
\end{itemize}
Here \,$A\!\ge\!1$ and \,$\delta\!>\!0$\ssf.
Then the following definitions of the Hardy space \,$H^1(X)$ and their corresponding norms
are equivalent\,{\rm:}
\begin{itemize}
\item[$\bullet$]
{\rm Maximal definition\,:}
$H^1_{{\rm max}, K_r}(X)$ \ssf consists of all functions \,$f\!\in\!L^1(X,d\mu)$ such that
\begin{equation*}
K_*f(x)=\ssf\sup\nolimits_{\,r>0}\,
\Bigl|\ssf{\displaystyle\int_X}K_r(x,y)\,f(y)\,d\mu(y)\Bigr|
\end{equation*}
belongs to \ssf$L^1(X,d\mu)$
and the norm \,$\|f\|_{H^1_{{\rm max}, K_r}(X)} = \|K_*f\|_{L^1(X,d\mu)}$.
\item[$\bullet$]
{\rm Atomic definition\,:} An atom for $H^1_{\rm atom}(X,\widetilde d)$  is a measurable function \,$a\ssb:\ssb X\!\to\ssb\C$
\,such that: \ssf$a$ \ssf is supported in a quasi-ball \ssf$\widetilde{B}$\ssf,
\,$\|a\|_{L^\infty}\!\lesssim\ssb\mu\ssf(\widetilde{B})^{-1}$
\ssf and \,$\displaystyle\int_Xa \,d\mu\ssb=\ssb0$\ssf (see, \cite{CoifmanWeiss, MaciasSegovia, Uchiyama}).
Then
$H^1_{\rm atom}(X, \widetilde d)$ \ssf consists of all functions \,$f\!\in\!L^1(X,d\mu)$
which can be written as
\,$f\!=\!\sum_{\ssf\ell}\ssb\lambda_{\ssf\ell}\ssf a_{\ssf\ell}$\ssf,
where the \ssf$a_{\ssf\ell}$\ssb's are atoms
and \,$\sum_{\ssf\ell}\ssb|\lambda_{\ssf\ell}|\!<\!+\infty$\ssf,
and the norm norm $\| f\|_{H^1_{\rm atom}(X,\widetilde d)}= \inf \sum_{\ssf\ell}\ssb|\lambda_{\ssf\ell}|$
\ssf over all such representations.
\end{itemize}
\end{theorem}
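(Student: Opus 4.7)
The plan is to establish the two inclusions separately, and, as is standard for Hardy space characterizations on spaces of homogeneous type, the inclusion $H^1_{\text{atom}}(X,\widetilde d)\subset H^1_{\text{max},K_r}(X)$ is the easier direction, while the reverse inclusion requires the full strength of the kernel hypotheses through a Whitney--Calder\'on--Zygmund construction in the spirit of Uchiyama's original argument.

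For the easy direction I would fix an atom $a$ supported in a quasi-ball $\widetilde B=\widetilde B(x_0,r)$ and estimate $K_* a$ separately on $\widetilde B^{*}=\widetilde B(x_0,Cr)$ (with $C=C(A)$ large) and on its complement. The pointwise bound $K_r(x,y)\le r^{-1}$ together with Schur's test gives $L^2$-boundedness of $K_*$; Cauchy--Schwarz with $\|a\|_{L^2}\lesssim\mu(\widetilde B)^{-1/2}$ and $\mu(\widetilde B^{*})\lesssim\mu(\widetilde B)$ then yields a uniform $L^1$-bound for $K_*a$ on $\widetilde B^{*}$. On $X\setminus\widetilde B^{*}$, the mean-zero property of $a$ lets one subtract $K_r(x,x_0)$ inside the integral defining $K_r a(x)$, so the H\"older estimate gives $|K_r a(x)|\lesssim r^{\delta}(r+\widetilde d(x,x_0))^{-1-\delta}$; integrating this in the quasi-distance, using $\mu(\widetilde B(x_0,s))\le s$, produces a finite $L^1$-bound independent of~$a$.

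For the hard direction, given $f$ with $K_* f\in L^1(X,d\mu)$, I would construct an atomic decomposition by a level-set approach. Set $\Omega_k=\{x:K_*f(x)>2^{k}\}$, so that $\sum_k 2^{k}\mu(\Omega_k)\lesssim\|K_*f\|_{L^1}$. Cover each $\Omega_k$ by a Whitney-type family $\{\widetilde B_{k,j}\}$ of quasi-balls with bounded overlap and $\widetilde d(\widetilde B_{k,j},X\setminus\Omega_k)\asymp\mathrm{diam}\,\widetilde B_{k,j}$, and build a H\"older partition of unity $\{\phi_{k,j}\}$ subordinate to it. Define a ``bad part'' $b_k=\sum_j(f-c_{k,j})\phi_{k,j}$, with constants $c_{k,j}$ chosen so that every summand has mean zero against $d\mu$; each normalized summand is an atom on $C\widetilde B_{k,j}$ with amplitude $\lambda_{k,j}\asymp 2^{k}\mu(\widetilde B_{k,j})$. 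Telescoping $f=\sum_k(b_{k+1}-b_k)$ over $k$, together with $\sum_{k,j}|\lambda_{k,j}|\lesssim\sum_k 2^{k}\mu(\Omega_k)\lesssim\|K_*f\|_{L^1}$, yields the atomic decomposition.

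The main obstacle is to show that the pointwise values of $f$ on each Whitney quasi-ball are controlled by $K_*f$, so that the mean-zero ``bad'' pieces really give atoms of the correct amplitude. This is precisely Uchiyama's technical comparison of $K_*f$ with a grand maximal function obtained by testing $f$ against all admissible kernels, and it is here that the lower bound $K_r(x,x)\ge 1/(Ar)$, combined with the H\"older continuity of $K_r$, is essential: it guarantees that convolving against $K_r$ reproduces a genuine scale-$r$ average of $f$, so that $\Omega_k^{c}\subset\{Mf\lesssim 2^{k}\}$ for an appropriate Hardy--Littlewood-type maximal function $M$, and the Whitney pieces inherit the $L^{\infty}$ control that turns them into atoms.
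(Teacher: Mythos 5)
This statement is not proved in the paper at all: it is quoted verbatim as ``the following result due to Uchiyama \cite{Uchiyama}'' and used as a black box in the proof of Theorem \ref{propositionP}. So there is no internal proof to compare against; the only meaningful question is whether your blind sketch is a plausible outline of Uchiyama's external argument.

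Your easy direction (atom $\Rightarrow$ maximal) is essentially correct: the decay hypothesis $K_r(x,y)\le r^{-1}(1+\widetilde d(x,y)/r)^{-1-\delta}$ together with the doubling of $\mu$ shows $K_*$ is dominated by a Hardy--Littlewood-type maximal operator, hence bounded on $L^2$, and the Hölder condition plus the cancellation of the atom control the tail; integrating $r^\delta(r+\widetilde d(x,x_0))^{-1-\delta}$ against $d\mu$ over $X\setminus\widetilde B^*$ works because $\mu(\widetilde B(x_0,s))\le s$. One small imprecision: the crude bound $K_r(x,y)\le r^{-1}$ alone does not give $L^2$-boundedness of $K_*$ via Schur; you need the decay factor and the measure upper bound to run the argument. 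For the hard direction your outline has the correct shape (level sets of $K_*f$, Whitney decomposition, Hölder partition of unity, telescoping), but it entirely defers the crux. Uchiyama's theorem is precisely that a \emph{single} kernel $K_r$ satisfying the three stated pointwise conditions already controls a grand maximal function, without assuming such control a priori; that bootstrap, using the on-diagonal lower bound $K_r(x,x)\ge 1/(Ar)$ together with the Hölder modulus to iteratively upgrade $K_*f$ to control of all admissible test functions, is the genuinely nontrivial and lengthy part of \cite{Uchiyama}, and your sketch only names it rather than reproduces it. (Also, the telescoping identity should read $f=\sum_k(b_k-b_{k+1})$, i.e.\ $f=\sum_k(g_{k+1}-g_k)$ with $g_k=f-b_k$, a sign you have reversed.) As a summary of where the argument lives and what each hypothesis is for, your proposal is accurate; as a proof it is a skeleton with the load-bearing step missing, but since the paper itself only cites the result, this is an acceptable level of detail for the purpose the theorem serves here.
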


For \ssf$X\!=\ssb\R^n$,
equipped with the Euclidean distance
\,$d\ssf(\mathbf{x},\mathbf{y})\ssb=\ssb|\ssf\mathbf{x}\ssb-\ssb\mathbf{y}\ssf|$
\ssf and the measure $\boldsymbol \mu$ (see \eqref{ProductMeasure}),
set
\begin{equation*}
\widetilde{d}\ssf(\mathbf{x},\mathbf{y})=\inf\boldsymbol{\mu}\ssf(B),
\qquad\forall\;\mathbf{x},\mathbf{y}\!\in\!\R^n,
\end{equation*}
where the infimum is taken over all closed balls \ssf$B$
\ssf containing \ssf$\mathbf{x}$ \ssf and \ssf$\mathbf{y}$. Let $t\ssb=\ssb t\ssf(\mathbf{x},r)$ be defined by
\ssf$\boldsymbol{\mu}\ssf(B\ssf(\mathbf{x},\ssb\sqrt{t\ssf}\ssf))\ssb=\ssb r$.
Then
$$ \boldsymbol \mu(\widetilde B(\mathbf x,r))\sim r$$
and there exists a constant \ssf$c\!>\!0$ \ssf such that
\begin{equation}\label{balls}
B(\mathbf{x},\ssb\sqrt{\ssf t\,})
\subset\widetilde{B}(\mathbf{x},r)
\subset B(\mathbf{x},c\hspace{.4mm}\sqrt{\ssf t\,}),
\end{equation}
where $\widetilde{B}(\mathbf{x},r)=\{ \mathbf y\in \mathbb R^n: \widetilde d(\mathbf x,\mathbf y)<r\}$
(see, e.g., \cite{ABDH}).

Let us remark that thanks to (\ref{balls}) and (\ref{ComparisonVolumeBalls}) the atomic spaces $H^1_{\rm atom}(X,\widetilde d)$ and $H^1_{\rm atom}$ (defined in Section 1) do coincide and $\| f\|_{H^1_{\rm atom}(X,\widetilde d)} \sim \| f\|_{H^1_{\rm atom}}$.

It was proved in \cite{ABDH} that the kernel $\mathbf h_t$ can be written in the form
\begin{equation*}
{\mathbf{h}}_{\ssf t}(\mathbf{x},{\mathbf{y}})
={\mathbf{H}}_{\ssf t}(\mathbf{x},{\mathbf{y}})
+{\mathbf{S}}_{\ssb t}(\mathbf{x},{\mathbf{y}})\ssf,
\end{equation*}
where ${\mathbf{H}}_{\ssf t}(\mathbf{x},{\mathbf{y}})$
and ${\mathbf{ S}}_{\ssb t}(\mathbf{x},{\mathbf{y}})\ssf$ are nonnegative functions such that
there are $C_1,\, C_2\, C_4,\, \delta>0$ such that

\begin{equation}\label{OnDiagonalLowerEstimateKr}
\mathbf H_t(\mathbf{x},\mathbf{x})\ge\tfrac{C_1}{\boldsymbol \mu (B(\mathbf x,\sqrt{t}))};
\end{equation}
\begin{equation}\label{UpperEstimateKr}
\mathbf H_t(\mathbf{x},\mathbf{y})\le\tfrac{C_2}{ \boldsymbol \mu (B(\mathbf x,\sqrt{t}))} \,\bigl(\ssf1\ssb
+\ssb\tfrac{\widetilde{d}\ssf(\mathbf{x},\ssf\mathbf{y})} { \boldsymbol \mu (B(\mathbf x,\sqrt{t}))} \ssf\bigr)^{\ssb-1-\delta};
\end{equation}

\begin{equation}\label{LipschitzEstimateKr}
\bigl|\ssf \mathbf H_t (\mathbf{x},\mathbf{y})\ssb
-\ssb \mathbf H_t(\mathbf{x},\mathbf{y}^{\ssf\prime})\ssf\bigr|
\le\tfrac{C_4}{\boldsymbol \mu ( B (\mathbf x,\sqrt{t}))} \ssf\bigl(\ssf
1\ssb+\ssb\tfrac{\widetilde{d}\ssf(\mathbf{x},\ssf\mathbf{y})}{\boldsymbol \mu (B(\mathbf x,\sqrt{t}))}
\ssf\bigr)^{\ssb-1-2\ssf\delta}\ssf
\bigl(\ssf\tfrac{\widetilde{d}\ssf(\mathbf{y},\ssf\mathbf{y}^{\ssf\prime})}{\boldsymbol \mu ( B(\mathbf x,\sqrt{t}))}
\ssf\bigr)^{\ssb\frac1{\mathbf{N}}}
\end{equation}
for $\widetilde{d}\ssf(\mathbf{y},\mathbf{y}^{\ssf\prime})\ssb
\le\ssb C_3\max\,\{\ssf \boldsymbol \mu (B(\mathbf x,\sqrt{t})),\widetilde{d}\ssf(\mathbf{x},\mathbf{y})\ssf\}$\ssf,
 (the kernel $\mathbf S_t$ is denoted in \cite{ABDH} by $\mathbf P_t$).
Moreover, the maximal function
$$ \mathbf S_*f(x)=\sup_{t>0} \Big|\int \mathbf S_t(\mathbf x,\mathbf y)f(\mathbf y)d\boldsymbol \mu(\mathbf y) \Big| $$
is a bounded operator on $L^1(\mathbb R^n,\, d\boldsymbol\mu )$.

Using  subordination formula (\ref{subordination})  we write
\begin{equation}\label{splitP}
 P_t(\mathbf x,\mathbf y) = U_t(\mathbf x,\mathbf y) + W_t(\mathbf x,\mathbf y),
 \end{equation}
where
$$ U_t(\mathbf x,\mathbf y)=c_1\int_0^\infty e^{-u}\mathbf H_{t^2\slash 4u}(\mathbf x,\mathbf y)\frac{du}{\sqrt{u}}, \ \ W_t(\mathbf x,\mathbf y) = c_1\int_0^\infty e^{-u}\mathbf S_{t^2\slash 4u}(\mathbf x,\mathbf y)\frac{du}{\sqrt{u}}.$$
Clearly, the maximal operator
\begin{equation*}\label{maximalW}
W_*f(\mathbf x)=\sup_{t>0} \Big|\int W_t(\mathbf x,\mathbf y)f(\mathbf y)
d\boldsymbol \mu(\mathbf y)\Big|
\end{equation*}
is bounded on $L^1(\mathbb R^n,\, d\boldsymbol\mu )$, that is,
\begin{equation}\label{App22}
\| W_*f\|_{L^1(\mathbb R^n,\, d\boldsymbol \mu )} \leq C \| f\|_{L^1(\mathbb R^n,\, d\boldsymbol \mu )}.
\end{equation}
Our task is to prove the following lemma.
\begin{lemma}\label{Lemma610}
 There are  constants $C_1,\, C_2,\, C_4,\,  \delta' >0$ such that
\begin{equation}\label{OnDiagonalLowerEstimateKr2}
U_t(\mathbf{x},\mathbf{x})\ge\tfrac{C_1}{\boldsymbol \mu (B (\mathbf x,t))};
\end{equation}
\begin{equation}\label{UpperEstimateKr2}
U_t(\mathbf{x},\mathbf{y})\le\tfrac{C_2}{ \boldsymbol \mu (B(\mathbf x,t))} \,\bigl(\ssf1\ssb
+\ssb\tfrac{\widetilde{d}\ssf(\mathbf{x},\ssf\mathbf{y})} { \boldsymbol \mu (B(\mathbf x,t))} \ssf\bigr)^{\ssb-1-\delta'};
\end{equation}

\begin{equation}\label{LipschitzEstimateKr2}
\bigl|\ssf U_t (\mathbf{x},\mathbf{y})\ssb
-\ssb U_t(\mathbf{x},\mathbf{y}^{\ssf\prime})\ssf\bigr|
\le\tfrac{C_4}{\boldsymbol \mu (B(\mathbf x, t))} \ssf\bigl(\ssf
1\ssb+\ssb\tfrac{\widetilde{d}\ssf(\mathbf{x},\ssf\mathbf{y})}{\boldsymbol \mu (B(\mathbf x,t))}
\ssf\bigr)^{\ssb-1-2\ssf\delta'}\ssf
\bigl(\ssf\tfrac{\widetilde{d}\ssf(\mathbf{y},\ssf\mathbf{y}^{\ssf\prime})}{\boldsymbol \mu ( B(\mathbf x,t))}
\ssf\bigr)^{\delta'}
\end{equation}
for $\widetilde{d}\ssf(\mathbf{y},\mathbf{y}^{\ssf\prime})\ssb
\le\ssb C_3\max\,\{\ssf \boldsymbol \mu (B(\mathbf x,t )),\widetilde{d}\ssf(\mathbf{x},\mathbf{y})\ssf\}$\ssf .
\end{lemma}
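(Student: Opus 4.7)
The plan is to deduce (\ref{OnDiagonalLowerEstimateKr2})--(\ref{LipschitzEstimateKr2}) from the heat kernel estimates (\ref{OnDiagonalLowerEstimateKr})--(\ref{LipschitzEstimateKr}) by inserting them into the subordination representation (\ref{splitP}) of $U_t$ and converting balls of radius $\sqrt s = t/(2\sqrt u)$ to balls of radius $t$. Writing $V(r) = \boldsymbol\mu(B(\mathbf x, r))$ and $\alpha(u) = V(t)/V(t/(2\sqrt u))$, the volume comparison (\ref{ComparisonVolumeBalls}) yields
$$
\alpha(u) \lesssim (2\sqrt u)^n \text{ for } u \leq 1/4, \qquad \alpha(u) \lesssim (2\sqrt u)^{\mathbf N} \text{ for } u \geq 1/4,
$$
with matching lower bounds (exponents swapped). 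Every bound in the lemma then reduces to a one-variable integral against $e^{-u}/\sqrt u$ involving $\alpha$.

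For (\ref{OnDiagonalLowerEstimateKr2}), I restrict the subordination integral to $u \in [1/16, 1/4]$, where $\sqrt s \in [t, 2t]$ and (\ref{ComparisonVolumeBalls}) gives $V(\sqrt s) \lesssim V(t)$; applying (\ref{OnDiagonalLowerEstimateKr}) then yields $U_t(\mathbf x, \mathbf x) \gtrsim 1/V(t)$.

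For (\ref{UpperEstimateKr2}), writing $\lambda = \widetilde d(\mathbf x, \mathbf y)/V(t)$ and using (\ref{UpperEstimateKr}) reduces the task to showing, for some $0 < \delta' < 1/n$,
$$
\int_0^\infty e^{-u}\, \alpha(u)\, (1 + \lambda\,\alpha(u))^{-1-\delta}\, \frac{du}{\sqrt u} \lesssim (1 + \lambda)^{-1 - \delta'}.
$$
For $\lambda \lesssim 1$ the integrand is bounded by $e^{-u}\alpha(u)/\sqrt u$, which is integrable on $(0,\infty)$ since the polynomial growth at infinity is killed by $e^{-u}$. For $\lambda \gg 1$ I split at $u = 1/4$; on $[1/4, \infty)$ the bound $(1+\lambda\alpha(u))^{-1-\delta} \leq (\lambda\alpha(u))^{-1-\delta}$ together with exponential decay gives $\lambda^{-1-\delta}$, and on $(0, 1/4]$ I split further at the unique $u_0 \asymp \lambda^{-2/n}$ where $\lambda\alpha(u_0) \asymp 1$, obtaining the matching power $\lambda^{-1-1/n}$.

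For (\ref{LipschitzEstimateKr2}), set $K = \widetilde d(\mathbf y, \mathbf y')$ and distinguish two cases. If $\widetilde d(\mathbf x, \mathbf y) \gtrsim K$, then $K \leq C_3 \max(V(\sqrt s), \widetilde d(\mathbf x, \mathbf y))$ holds for every $u > 0$, so (\ref{LipschitzEstimateKr}) applies throughout, and the calculation proceeds as in the upper bound with an additional factor $(K/V(\sqrt s))^{1/\mathbf N} = (K/V(t))^{1/\mathbf N}\, \alpha(u)^{-1/\mathbf N}$, from which the desired $(K/V(t))^{\delta'}$ is extracted. If instead $K \gg \widetilde d(\mathbf x, \mathbf y)$ (so $K \lesssim V(t)$ by the hypothesis), I split the subordination integral at the threshold $u_0$ determined by $V(t/(2\sqrt{u_0})) = K$: on $u \leq u_0$ the Lipschitz hypothesis for $\mathbf H_s$ is in force and (\ref{LipschitzEstimateKr}) applies, while on $u \geq u_0$ I use $|U_t(\mathbf x, \mathbf y) - U_t(\mathbf x, \mathbf y')| \leq U_t(\mathbf x, \mathbf y) + U_t(\mathbf x, \mathbf y')$ with the upper bound (\ref{UpperEstimateKr2}) just proved. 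The main obstacle I foresee is this balancing across the cutoff: $\delta'$ must be taken small enough (in particular $\delta' < 1/\mathbf N$ and $\delta' < 1/n$) so that the H\"older exponent $1/\mathbf N$ furnished by (\ref{LipschitzEstimateKr}) survives the change of scale and the case split, while remaining positive so that the integrated estimate produces a genuine Lipschitz factor.
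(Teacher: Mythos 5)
Your treatment of the lower bound and the upper bound is essentially sound and close in spirit to the paper's argument (the paper restricts the lower bound to $u\in[1,\infty)$ rather than $[1/16,1/4]$, and for the upper bound it peels off $(1+\widetilde d/V(t))^{-1-\delta}$ and bounds the remaining $\alpha$-integral rather than proving a one-variable integral inequality in $\lambda$, but these are cosmetic differences). A small slip: since $\alpha(u)=V(t)/V(t/(2\sqrt u))$, one has $(K/V(\sqrt s))^{1/\mathbf N}=(K/V(t))^{1/\mathbf N}\,\alpha(u)^{+1/\mathbf N}$, not $\alpha(u)^{-1/\mathbf N}$; this sign error does not affect convergence but should be corrected.

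The real gap is in your Case A of the Lipschitz estimate, i.e.\ when $\widetilde d(\mathbf x,\mathbf y)\gtrsim K:=\widetilde d(\mathbf y,\mathbf y')$. Inserting \eqref{LipschitzEstimateKr} into the subordination integral and carrying out the computation produces, at best,
$$
|U_t(\mathbf x,\mathbf y)-U_t(\mathbf x,\mathbf y')|\lesssim\frac{1}{V(t)}\Bigl(1+\tfrac{\widetilde d(\mathbf x,\mathbf y)}{V(t)}\Bigr)^{-1-2\delta}\Bigl(\tfrac{K}{V(t)}\Bigr)^{1/\mathbf N},
$$
with the H\"older exponent stuck at exactly $1/\mathbf N$ and the decay exponent stuck at $1+2\delta$ inherited from \eqref{LipschitzEstimateKr}. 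To convert this into the target $(1+\widetilde d/V(t))^{-1-2\delta'}(K/V(t))^{\delta'}$ when $\widetilde d(\mathbf x,\mathbf y)>V(t)$ (so that $K$ may exceed $V(t)$), one must absorb the surplus $(K/V(t))^{1/\mathbf N-\delta'}\lesssim(\widetilde d(\mathbf x,\mathbf y)/V(t))^{1/\mathbf N-\delta'}$ into the $\widetilde d$-decay, which requires $1/\mathbf N+\delta'\le 2\delta$, i.e.\ $\delta>1/(2\mathbf N)$. Nothing guarantees this; the exponent $\delta$ in \eqref{LipschitzEstimateKr} is an arbitrary positive constant coming from \cite{ABDH}. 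Your final paragraph only records the constraints $\delta'<1/\mathbf N$, $\delta'<1/n$, and does not address this balancing.

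The paper sidesteps this by decoupling the two ingredients. It first proves the \emph{weak} Lipschitz estimate \eqref{LipschitzEstimateKr3}
$$
|U_t(\mathbf x,\mathbf y)-U_t(\mathbf x,\mathbf y')|\lesssim\frac{1}{V(t)}\Bigl(\tfrac{K}{V(t)}\Bigr)^{1/\mathbf N},
$$
dropping the $(1+\ldots)^{-1-2\delta}$ factor of \eqref{LipschitzEstimateKr} entirely. Separately, in the regime $\widetilde d(\mathbf x,\mathbf y)>V(t)$ and $K<\widetilde d(\mathbf x,\mathbf y)/(2A)$ it derives \eqref{LipschitzEstimateKr4}
$$
|U_t(\mathbf x,\mathbf y)-U_t(\mathbf x,\mathbf y')|\lesssim\frac{1}{V(t)}\Bigl(1+\tfrac{\widetilde d(\mathbf x,\mathbf y)}{V(t)}\Bigr)^{-1-\delta}
$$
directly from the upper bound \eqref{UpperEstimateKr2} applied at both $\mathbf y$ and $\mathbf y'$. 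Taking a geometric mean of \eqref{LipschitzEstimateKr3} and \eqref{LipschitzEstimateKr4} with exponents $(1-\theta,\theta)$ then yields \eqref{LipschitzEstimateKr2} with $\delta'=(1-\theta)/\mathbf N$ chosen so that $\theta(1+\delta)=1+2\delta'$; this works for any $\delta>0$. A final case $\widetilde d(\mathbf x,\mathbf y)>V(t)$, $K\gtrsim\widetilde d(\mathbf x,\mathbf y)$ is handled from \eqref{LipschitzEstimateKr3} alone since there $\widetilde d(\mathbf x,\mathbf y)\sim K$ is controlled. You should replace your Case A by this interpolation scheme, or else explain why $\delta>1/(2\mathbf N)$ may be assumed.
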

\begin{proof}
Take $0<\delta<\mathbf N^{-1}$.  By (\ref{OnDiagonalLowerEstimateKr}) and the subordination formula we have
 \begin{equation*}
 \begin{split}
 U_t(\mathbf x,\mathbf x)&\gtrsim \int_1^\infty  \frac{e^{-u}}{\boldsymbol \mu (B(\mathbf x,\frac{t}{2\sqrt{u}}))}
 \frac{du}{\sqrt{u}}
 \gtrsim  \int_1^\infty \frac{e^{-u}}{\boldsymbol \mu(B(\mathbf x, t))}  \frac{\boldsymbol \mu(B(\mathbf x, t))}{\boldsymbol \mu (B(\mathbf x,\frac{t}{2\sqrt{u}}))}
 \frac{du}{\sqrt{u}}\\
 &\gtrsim \frac{1}{\boldsymbol \mu(B(\mathbf x, t))} \int_1^\infty \frac{e^{-u}}{\sqrt{u}}\, du \gtrsim \frac{1}{\boldsymbol \mu(B(\mathbf x, t))},
 \end{split}
 \end{equation*}
 which proves (\ref{OnDiagonalLowerEstimateKr2}).

The proof of (\ref{UpperEstimateKr2}) is similar to that of (\ref{App7}). Indeed, by (\ref{UpperEstimateKr}) we have
\begin{equation}\begin{split}\nonumber
U_t(\mathbf x,\mathbf y)&\leq \int_0^\infty \frac{e^{-u}}{\boldsymbol \mu(B(\mathbf x, t))}
\frac{\boldsymbol \mu(B(\mathbf x, t))}{\boldsymbol \mu(B(\mathbf x, \frac{t}{2\sqrt{u}}))}
\Big( 1+ \frac{\tilde d(\mathbf x,\mathbf y)}{{\boldsymbol \mu(B(\mathbf x, t))}}\frac{\boldsymbol \mu(B(\mathbf x, t))}{\boldsymbol \mu(B(\mathbf x, \frac{t}{2\sqrt{u}}))}\Big)^{-1-\delta}\frac{du}{\sqrt{u}}\\
&\leq \int_0^{\frac{1}{4}} \frac{e^{-u}}{\boldsymbol \mu(B(\mathbf x, t))}
\frac{\boldsymbol \mu(B(\mathbf x, t))}{\boldsymbol \mu(B(\mathbf x, \frac{t}{2\sqrt{u}}))}
\Big( 1+ \frac{\tilde d(\mathbf x,\mathbf y)}{{\boldsymbol \mu(B(\mathbf x, t))}}\Big)^{-1-\delta} \Big( \frac{\boldsymbol \mu(B(\mathbf x, t))}{\boldsymbol \mu(B(\mathbf x, \frac{t}{2\sqrt{u}}))} \Big)^{-1-\delta}
\frac{du}{\sqrt{u}}\\
&\ \ +  \int_{\frac{1}{4}}^\infty  \frac{e^{-u}}{\boldsymbol \mu(B(\mathbf x, t))}
\frac{\boldsymbol \mu(B(\mathbf x, t))}{\boldsymbol \mu(B(\mathbf x, \frac{t}{2\sqrt{u}}))}
\Big( 1+ \frac{\tilde d(\mathbf x,\mathbf y)}{{\boldsymbol \mu(B(\mathbf x, t))}}\Big)^{-1-\delta}\frac{du}{\sqrt{u}}.\\
\end{split}\end{equation}
Now using (\ref{ComparisonVolumeBalls})   we obtain (\ref{UpperEstimateKr2}).

 Now we turn to the proof of (\ref{LipschitzEstimateKr2}). First we show that for every $\mathbf x,\mathbf y, \mathbf y'\in\mathbb R^n$ we see that
\begin{equation}\label{LipschitzEstimateKr3}
\bigl|\ssf U_t (\mathbf{x},\mathbf{y})\ssb
-\ssb U_t(\mathbf{x},\mathbf{y}^{\ssf\prime})\ssf\bigr|
\le\tfrac{C_4}{\boldsymbol \mu (B(\mathbf x, t))}
\bigl(\ssf\tfrac{\widetilde{d}\ssf(\mathbf{y},\ssf\mathbf{y}^{\ssf\prime})}{\boldsymbol \mu ( B(\mathbf x,t))}
\ssf\bigr)^{\ssb\frac1{\mathbf{N}}}.
\end{equation}
Since $U_t(\mathbf x,\mathbf y)\leq C\boldsymbol \mu (B(\mathbf x,t))^{-1}$ (see (\ref{UpperEstimateKr2})),  it suffices to prove (\ref{LipschitzEstimateKr3}) for $\tilde d(\mathbf y,\mathbf y')\leq \boldsymbol\mu (B(\mathbf x,t))$. Let $u_0\geq 1\slash 4$ be such that $\boldsymbol\mu(B(\mathbf x,\frac{t}{2\sqrt{u_0}}))=\tilde d(\mathbf y,\mathbf y')$. Then, using (\ref{LipschitzEstimateKr}) and (\ref{ComparisonVolumeBalls}), we have
\begin{equation}\begin{split}\nonumber
& \int_0^{u_0} e^{-u}|\mathbf H_{\frac{t^2} {4u}}(\mathbf x,\mathbf y)-\mathbf H_{\frac{t^2}{ 4u}}(\mathbf x,\mathbf y')|\frac{du}{\sqrt{u}}
 \lesssim  \int_0^{u_0} \frac{e^{-u}}{\boldsymbol \mu (B(\mathbf  x,\frac{t}{2\sqrt{u}}))} \Big(\frac{\tilde d(\mathbf y,\mathbf  y')}{\boldsymbol \mu(B(\mathbf x,\frac{t}{2\sqrt{u}}))}\Big)^{\frac{1}{ \mathbf N} } \frac{du}{\sqrt{u}}\\
 &= \int_0^{u_0} \frac{e^{-u}}{\boldsymbol \mu (B(\mathbf  x,t))} \Big(\frac{\tilde d(\mathbf y,\mathbf  y')}{\boldsymbol \mu(B(\mathbf x,t))}\Big)^{\frac{1}{ \mathbf N} }
 \Big(\frac{\mu (B(\mathbf  x,t))}{\mu (B(\mathbf  x,\frac{t}{2\sqrt{u}}))}\Big)^{1+\frac{1}{\mathbf N}}
 \frac{du}{\sqrt{u}}\\
 &\lesssim\frac{1}{\boldsymbol \mu (B(\mathbf  x,t))} \Big(\frac{\tilde d(\mathbf y,\mathbf  y')}{\boldsymbol \mu(B(\mathbf x,t))}\Big)^{\frac{1}{ \mathbf N} } \Big(\int_0^{1\slash 4} e^{-u}u^{n(1+\frac{1}{\mathbf N})\slash 2}\frac{du}{\sqrt{u}}+\int_{1\slash 4}^{u_0}e^{-u}u^{\mathbf N(1+\mathbf N^{-1})\slash 2}
 \frac{du}{\sqrt{u}}\Big)\\
 &\lesssim \frac{1}{\boldsymbol \mu (B(\mathbf  x,t))} \Big(\frac{\tilde d(\mathbf y,\mathbf  y')}{\boldsymbol \mu(B(\mathbf x,t))}\Big)^{\frac{1}{ \mathbf N} }.\\
\end{split}\end{equation}
Similarly, by (\ref{UpperEstimateKr}), we get
\begin{equation}\begin{split}\nonumber
 \int_{u_0}^\infty  e^{-u}|\mathbf H_{\frac{t^2} {4u}}(\mathbf x,\mathbf y)-\mathbf H_{\frac{t^2}{ 4u}}(\mathbf x,\mathbf y')|\frac{du}{\sqrt{u}}
 & \lesssim \int_{u_0}^\infty \frac{e^{-u}}{\boldsymbol \mu(B(\mathbf x,t))}\Big(\frac{\mu(B(\mathbf x,t))}{
 \mu(B(\mathbf x,\frac{t}{2\sqrt{u}}))}\Big)\frac{du}{\sqrt{u}} \\
 &\lesssim \frac{1}{\boldsymbol \mu(B(\mathbf x,t))} \int_{u_0}^\infty e^{-u}u^{\mathbf N\slash 2}\frac{du}{\sqrt{u}}\\
 &\lesssim \frac{1}{\boldsymbol \mu(B(\mathbf x,t))}u_0^{-\mathbf N\slash 2}.
 \end{split}\end{equation}
Since
$$ \frac{\tilde d(\mathbf y,\mathbf y')}{ \boldsymbol\mu ( B(\mathbf x,t))}=
\frac{\boldsymbol\mu (B(\mathbf x,\frac{t}{2\sqrt{u_0}}))}{\boldsymbol \mu (B(\mathbf x,t))}\gtrsim
u_0^{-\mathbf N\slash 2},$$
(see (\ref{ComparisonVolumeBalls})), we obtain (\ref{LipschitzEstimateKr3}).

We are now in a position to continue the proof of (\ref{LipschitzEstimateKr2}).

If $\tilde d(\mathbf x,\mathbf y)\leq \boldsymbol\mu (B(\mathbf x,t))$ then (\ref{LipschitzEstimateKr2}) follows from (\ref{LipschitzEstimateKr3}).

If $\tilde d(\mathbf x,\mathbf y)>\boldsymbol\mu (B(\mathbf x,t))$
and $\tilde d(\mathbf y,\mathbf y') <\tilde d(\mathbf x,\mathbf y)\slash (2A)$, then $\tilde d(\mathbf x,\mathbf y)\leq 2A\tilde d (\mathbf x,\mathbf y')$. Hence, from (\ref{UpperEstimateKr2})
we conclude that
\begin{equation}\label{LipschitzEstimateKr4}
\bigl|\ssf U_t (\mathbf{x},\mathbf{y})\ssb
-\ssb U_t(\mathbf{x},\mathbf{y}^{\ssf\prime})\ssf\bigr| \leq
\tfrac{C_2'}{\boldsymbol \mu (B(\mathbf x, t))} \ssf\bigl(\ssf
1\ssb+\ssb\tfrac{\widetilde{d}\ssf(\mathbf{x},\ssf\mathbf{y})}{\boldsymbol \mu (B(\mathbf x,t))}
\ssf\bigr)^{\ssb-1-\ssf\delta}\ssf .
\end{equation}
 Consequently, we deduce (\ref{LipschitzEstimateKr2}) (with perhaps small $\delta'>0$) from (\ref{LipschitzEstimateKr3}) and (\ref{LipschitzEstimateKr4}).

 It remains to consider the case when   $\tilde d(\mathbf x,\mathbf y) > \boldsymbol\mu (B(\mathbf x,t))$
and $\tilde d(\mathbf y,\mathbf y') \geq \tilde d(\mathbf x,\mathbf y)\slash (2A)$.  Recall that  $\tilde d(\mathbf y,\mathbf y')\leq \boldsymbol \mu (B(\mathbf x,t))$. Thus $\tilde d(\mathbf x,\mathbf y)\sim \boldsymbol\mu (B(\mathbf x,t))$. So, finally, using (\ref{LipschitzEstimateKr3}) we have
\begin{equation}\nonumber
\bigl|\ssf U_t (\mathbf{x},\mathbf{y})\ssb
-\ssb U_t(\mathbf{x},\mathbf{y}^{\ssf\prime})\ssf\bigr|
\leq
\tfrac{C_4}{\boldsymbol \mu (B(\mathbf x, t))}
\bigl(\ssf\tfrac{\widetilde{d}\ssf(\mathbf{y},\ssf\mathbf{y}^{\ssf\prime})}{\boldsymbol \mu ( B(\mathbf x,t))}
\ssf\bigr)^{\ssb\frac1{\mathbf{N}}}
\leq
\tfrac{C_4}{\boldsymbol \mu (B(\mathbf x, t))} \bigl(\ssf\tfrac{\widetilde{d}\ssf(\mathbf{y},\ssf\mathbf{y}^{\ssf\prime})}{\boldsymbol \mu ( B(\mathbf x,t))}
\ssf\bigr)^{\ssb\frac1{\mathbf{N}}}\ssf\bigl(\ssf
1\ssb+\ssb\tfrac{\widetilde{d}\ssf(\mathbf{x},\ssf\mathbf{y})}{\boldsymbol \mu (B(\mathbf x,t))}
\ssf\bigr)^{\ssb-1-\ssf\delta}\ssf.
\end{equation}
This completes the proof of Lemma \ref{Lemma610}.
\end{proof}
Set $K_r(\mathbf x,\mathbf y)=U_t(\mathbf x,\mathbf y)$, where $r=\boldsymbol\mu (B(\mathbf x,t))$. Now part (a) of Theorem \ref{propositionP} follows from (\ref{splitP}), boundedness of the maximal function  $W_*$ on $L^1(\mathbb R^n, \boldsymbol \mu)$, and the Uchiyama theorem (see Theorem \ref{TheoremUchiyama}) combined with  Lemma \ref{Lemma610}.

 Now we turn to the proof of part (b) of Theorem \ref{propositionP}. Recall that $P_t(\mathbf x,\mathbf y)>0$.   So, by  (\ref{App5}), the operator $P_*$ is bounded on $L^\infty(\mathbb R^n, d\boldsymbol \mu)$.   Thanks to (\ref{UpperEstimateKr2}) and (\ref{App22}), it is of weak-type (1,1). Finally, from the Marcinkiewicz interpolation theorem we conclude that  $P_*$ is bounded on $L^p(\mathbb R^n,\, d\boldsymbol \mu)$ for $1<p<\infty$.
\end{proof}

\begin{proof}[Proof of Proposition \ref{propositionP2}] Fix $\varepsilon >0$. There is $R>0$ such that $|g(\mathbf x)|<\varepsilon $ for $|\mathbf x|>R$. Write
$$ g=g\chi_{B(0,R)}+g\chi_{B(0,R)^c}=: g_0+ g_1.$$
From (\ref{App5}) we get $|P_tg_1(\mathbf x)|<\varepsilon$ for every $t>0$ and $\mathbf x\in\mathbb R^n$.
Now using (\ref{App6}) we obtain
$$ | P_tg_0(\mathbf x)| \leq \frac{C}{\boldsymbol \mu (B(\mathbf x, t))} \| g_0\|_{L^1(\mathbb R^n, \, d\boldsymbol \mu)}\to 0 \ \ \ \text{as}\ t\to \infty.$$
On the other hand, if $t$ remains in a bounded interval and $|\mathbf x|>2nR$, applying (\ref{App7})  we have
$$ |P_tg_0(\mathbf x)| \leq \frac{C}{\boldsymbol \mu(B(\mathbf x, t))} \Big(1+\frac{\boldsymbol\mu (B(\mathbf x, |\mathbf x|))}{\boldsymbol \mu(B(\mathbf x, t))}\Big)^{-1-\delta} \| g_0\|_{L^1(\mathbb R^n, \, d\boldsymbol \mu)}\to 0 \ \ \text{as}\ |\mathbf x|\to\infty.$$
The proof of the first part of  Proposition \ref{propositionP2} is complete.

In order to prove the second part of the proposition we fix $\varepsilon >0$. We claim that
$$\lim_{|\mathbf x|\to\infty} P_\varepsilon f(\mathbf x) =0.$$
To proof the claim let $\varepsilon' >0$. Take $R>0$ large enough such that $\int_{|\mathbf y|>R} |f(\mathbf y)|\, d\boldsymbol \mu (\mathbf y)\leq \varepsilon' \boldsymbol\mu(B(0, \varepsilon))$. Write $f=f\chi_{B(0,R)}+f\chi_{B(0,R)^c}=:f_0+f_1$.
Then, by (\ref{App5}) and (\ref{App6}) we have  $|P_\varepsilon f_1|\leq \varepsilon'$. On the other hand  from the first part of the proposition we conclude that $\lim_{|\mathbf x|\to\infty } P_\varepsilon f_0(\mathbf x)=0$, which gives the claim. Now (\ref{DD1}) follows from the first part of Proposition \ref{propositionP2}, since
$P_{t+\varepsilon} f=P_t(P_\varepsilon f)$.
\end{proof}
{\bf Acknowledgments.} The author wishes to thank Jean-Philippe Anker, Pawe{\l}  G{\l}owacki and Rysiek Szwarc for their remarks. The author is greatly indebt  Bartosz Trojan for his suggestions which shorter the original proof of Theorem \ref{TheoremBessel}.

\end{document}